\def\etal{\textit{et al. }}
\newtheorem{remark}{Remark}
\newcommand{\bosy}{\boldsymbol}
\renewcommand{\b}{\textbf}
\title{Coupled Reconstruction of 2D Blood Flow and Vessel Geometry from Noisy Images via Physics-Informed Neural Networks and Quasi-Conformal Mapping\thanks{The work of Xue-Cheng Tai is partially supported by NORCE Kompetanseoppbygging program. The work of Jean-Michel Morel is partially supported by RGC-GRF project 11309925, Mathematical Formalization of GIS. The work of Raymond H. Chan is partially supported by HKRGC Grants CityU11309922, LU13300125, ITF Grant No. MHP/054/22, and LU BGR105824. Thanks to the InnoHK project at Hong Kong Centre for Cerebro-cardiovascular 
Health Engineering (COCHE).}}
\author{
Han Zhang \thanks{Department of Mathematics, City University of Hong Kong and Hong Kong Centre for Cerebro-Cardiovascular Health Engineering, Hong Kong, China.
(\email{hzhang863-c@my.cityu.edu.hk})}
\and Xue-Cheng Tai \thanks{Norwegian Research Centre (NORCE), Nygardsgaten 112, 5008 Bergen, Norway. 
(\email{xtai@norceresearch.no})}
\and Jean-Michel Morel \thanks{Department of Mathematics, City University of Hong Kong, Hong Kong, China.
(\email{jeamorel@cityu.edu.hk})}
\and Raymond H. Chan \thanks{Department of Operations and Risk Management and the School of Data Science, Lingnan University and Hong Kong Centre for Cerebro-Cardiovascular Health Engineering, Hong Kong, China. 
(\email{raymond.chan@ln.edu.hk})}
}
\begin{document}

\maketitle
\begin{abstract}
Blood flow imaging provides important information for hemodynamic behavior within the vascular system and plays an essential role in medical diagnosis and treatment planning. However, obtaining high-quality flow images remains a significant challenge. In this work, we address the problem of denoising flow images that may suffer from artifacts due to short acquisition times or device-induced errors. We formulate this task as an optimization problem, where the objective is to minimize the discrepancy between the modeled velocity field, constrained to satisfy the Navier-Stokes equations, and the observed noisy velocity data. To solve this problem, we decompose it into two subproblems: a fluid subproblem and a geometry subproblem. The fluid subproblem leverages a Physics-Informed Neural Network to reconstruct the velocity field from noisy observations, assuming a fixed domain. The geometry subproblem aims to infer the underlying flow region by optimizing a quasi-conformal mapping that deforms a reference domain. These two subproblems are solved in an alternating Gauss-Seidel fashion, iteratively refining both the velocity field and the domain. Upon convergence, the framework yields a high-quality reconstruction of the flow image. We validate the proposed method through experiments on synthetic flow data in a converging channel geometry under varying levels of Gaussian noise, and on real-like flow data in an aortic geometry with signal-dependent noise. The results demonstrate the effectiveness and robustness of the approach. Additionally, ablation studies are conducted to assess the influence of key hyperparameters.
\end{abstract}
\begin{keyword}
  Flow Image Reconstruction, Blood Flow Simulation, Physics-Informed Neural Network, Quasi-Conformal Geometry, Image Denoise, Image Segmentation
\end{keyword}

\begin{AMS}
  65N21, 65D18, 76D05, 92C50
\end{AMS}

\section{Introduction}

Blood flow imaging plays a fundamental role in characterizing cardiovascular dynamics, offering critical insights into hemodynamic behavior within the vascular system. This technique is indispensable for medical monitoring, cardiovascular disease diagnosis, and surgical planning. As a non-invasive approach, it enables safe assessment of patients with severe cardiovascular conditions while avoiding the flow disturbances caused by intravascular instrumentation \cite{yan2022impact}.

However, achieving accurate and high-resolution blood flow imaging remains a significant challenge. Existing modalities, such as flow MRI, often require long acquisition times \cite{markl20124d} to produce clear and high-quality flow images. This not only causes discomfort for patients but also makes the imaging process highly susceptible to motion artifacts, such as those caused by respiration. On the other hand, fast acquisition approaches such as low-dose flow MRI or Doppler ultrasound offer faster imaging but typically suffer from high noise levels and signal loss, resulting in data that may be unreliable for precise diagnostic purposes.

In this work, we focus on the problem of reconstructing flow images that are often corrupted by artifacts resulting from short acquisition times or device-induced noise. In this work, we follow the pioneering word done at  \cite{kontogiannis2022joint,kontogiannis2024bayesian}. They have modelled this problem by formulating it as an optimization problem which minimizes the discrepancy between velocity fields, constrained by the incompressible Navier-Stokes equations, and the noisy observations.
 
To more effectively tackle this problem, we propose a reformulation that decomposes into two coupled subproblems: a fluid subproblem and a geometry subproblem. In the fluid subproblem, the flow dynamics are reconstructed within a fixed domain by minimizing the discrepancy between the modeled and measured velocity fields, while ensuring physical consistency through the incompressible Navier-Stokes constraints. In the geometry subproblem, a correction mapping, interpretable as a domain gradient, is estimated to register the reconstructed velocity to the observed data, thereby capturing domain misalignments and approximating the gradient with respect to the spatial domain. By alternating between the two subproblems in a Gauss-Seidel fashion, the overall optimization proceeds iteratively until convergence, which then solves the original problem with theoretical support.

To efficiently solve this coupled optimization problem while leveraging the advantages of differentiable architectures, we design a new algorithm by making the entire framework to be network-based. For the fluid subproblem, a novel Physics-Informed Neural Network (PINN) is used to reconstruct the velocity field, allowing the Navier-Stokes constraints to be enforced directly through backpropagation. For the geometry subproblem, a U-Net architecture is adopted to predict the quasi-conformal mapping, which iteratively corrects geometric misalignments by minimizing the discrepancy between the measured and reconstructed velocity fields through a correction mapping.

We validate our method through comprehensive experiments. First, we evaluate performance on synthetic flow images in a converging channel geometry under both low and high Gaussian noise levels. Next, we test the approach on more anatomically realistic aorta geometries using a signal-dependent noise model. In all cases, our method produces more accurate and higher-quality reconstructions compared to conventional baselines. We also performed ablation studies to assess the impact of different weighting parameters in the optimization of the fluid and geometry subproblems, identifying effective hyperparameter choices. Note that our work is presented and implemented in the 2D setting; however, it can be potentially extended to 3D by replacing the Beltrami coefficient with 3D conformality measures in \cite{zhang2021topology}.

Overall, the contribution of this task could be summarized as follows.
\begin{itemize}
    \item We propose a novel framework for jointly reconstructing both the blood flow field and the flow domain geometry from noisy velocity images, by integrating the Navier-Stokes model with quasi-conformal mapping theory.
    \item We reformulate the inverse Navier-Stokes problem into two decoupled subproblems: a fluid subproblem and a geometry subproblem. Theoretical analysis is provided to justify the validity of this decomposition.
    \item We develop a method that utilizes Physics-Informed Neural Networks to reconstruct the velocity field by fitting to noisy measurement data and enforcing the Navier-Stokes equations as soft physical constraints.
    \item We introduce a mapping approach for segmenting the flow region using quasi-conformal geometry, which enables globally guided domain evolvation and leveraging morphological priors from the template.
\end{itemize}

\section{Related Work}

\subsection{Blood Simulation}
Accurate blood flow measurement is vital for cardiovascular healthcare, informing diagnosis, surgical planning, and treatment. Although invasive methods such as Fractional Flow Reserve (FFR) \cite{pijls1996measurement} are widely used, they pose risks such as complications and can disrupt the dynamics of natural flow \cite{yan2022impact}, leading to measurement inaccuracies. Numerical simulations offer a noninvasive alternative by modeling blood as an incompressible Newtonian fluid governed by the Navier-Stokes equations \cite{peskin1972flow}. To account for vessel deformation, these models are extended to fluid-structure interaction (FSI) frameworks \cite{quarteroni2004mathematical}.

Methods have been proposed to solve FSI problems. These include dimensional coupling \cite{formaggia2001coupling}, parallel preconditioning \cite{crosetto2011parallel}, and splitting schemes with stability guarantees \cite{bukavc2013fluid}. Other approaches involve space-time formulations \cite{bazilevs2008isogeometric}, discontinuous Galerkin methods \cite{wang2018higher}, coupled momentum methods \cite{figueroa2006coupled}, and learning-based approach \cite{zhang2024meshless,zhang2024full}.

\subsection{Image Denoise}
Image restoration is challenged by noise types such as speckle, Poisson, and Rician noise, which degrade quality and complicate analysis. To address these, a variety of denoising methods have been developed. Lou et al. \cite{lou2015weighted} proposed a weighted difference between anisotropic and isotropic total variation (TV) regularization. Li et al. \cite{li2016variational} introduced a variational model for multiplicative noise removal based on the difference of convex functions \cite{TAO1986249} combined with primal-dual algorithms. Xiao et al. \cite{xiao2011restoration} further improved impulsive noise removal by incorporating adaptive proximal parameters into a nonconvex TV-log model. To reduce dependence on initialization, Getreuer et al. \cite{getreuer2011variational} developed the convex Getreuer-Tong-Vese (GTV) model for MAP-Rician noise, efficiently solved using Bregman splitting \cite{goldstein2009split}. Chen et al. \cite{chen2015convex} proposed the strictly convex CZ model, which adds a quadratic regularization term for stability. 

Beyond these convex approaches, more advanced techniques employing higher-order and nonconvex regularization have also been explored. Kang et al. \cite{kang2015nonconvex} proposed a spatially adaptive nonconvex prior to better preserve fine structures, and Martin et al. \cite{martin20171} demonstrated effective Rician noise removal using nonconvex fidelity terms in conjunction with proximal point algorithms. These developments reflect the ongoing effort to improve robustness, structural preservation, and reconstruction accuracy in challenging noise environments.

\subsection{Physics-Based Reconstruction}
The reconstruction of flow images \cite{corona2021joint}, often corrupted by artifacts from short acquisition times or device noise, has been addressed using physics-based approaches. Kontogiannis \etal \cite{kontogiannis2022joint} formulated the inverse Navier-Stokes problem, which optimizes over the fluid dynamics and the computation domain by minimizing the discrepancy between modeled velocity fields and observations under incompressible Navier-Stokes constraints. This was extended to 3D flows in \cite{kontogiannis2024bayesian}, introducing a stabilized Nitsche cut-cell FEM for high-Reynolds-number flows, an implicit geometry representation with a viscous Eikonal constraint.

In parallel, Aguayo \etal \cite{aguayo2022analysis, aguayo2022stability, aguayo2023distributed} developed methods for modeling obstacles in incompressible flows using fictitious domains, penalization potentials, and Brinkman-type permeability terms for distributed-resistance treatment. For problems with known geometries, physics-informed neural networks have been applied to reconstruct flow fields: Sun \etal \cite{sun2020physics} proposed a physics-constrained neural network to reconstruct flow fields from sparse and noisy measurements, combining probabilistic modeling of physical constraints with data uncertainty to guide the network toward physically consistent solutions. Gao \etal \cite{gao2021super} developed a CNN-based framework for super-resolution and denoising of fluid flows, using physics-informed losses to reconstruct high-resolution fields from coarse or noisy observations without high-resolution labels. Wang \etal \cite{wang2022dense} employed physics-informed neural networks regularized by the Navier-Stokes equations for dense flow reconstruction, but their method does not account for boundary conditions or domain estimation during the reconstruction process.

The problem investigated in this study is inspired by the work of \cite{kontogiannis2022joint}, but our approach introduces several key differences. First, we reformulate the original problem into two alternating solved coupled subproblems explicitly. Such a reformulation differs from previous approaches and is supported by a dedicated theoretical result (Theorem \ref{thm:main}), ensuring that it is mathematically well founded. Second, we solve the fluid subproblem in a new way by using a physics-informed neural network (PINN), which removes the need for explicit numerical discretization, making the approach meshless and allowing the integration of modern deep learning techniques. Third, the geometry subproblem is newly formulated as a registration task, where the domain evolution is represented by a learned deformation map. Fourth, we incorporate quasi-conformal (QC) theory into the geometric component to ensure topology preservation and naturally embed morphological priors in the reconstruction process.

\section{Inverse Navier-Stokes problem}
\label{sec:problem}

\begin{table}[!htp]
\centering
\begin{tabular}{c|l||c|l}
\hline
\textbf{Notation}   & \textbf{Description}      & \textbf{Notation}     & \textbf{Description} \\\hline
$\bosy{x}$          & Spatial point             & $c$                   & Correction mapping\\
$\bosy{u}$          & Modeled flow velocity     & $f$                   & Deformation mapping\\
$p$                 & Kinetic pressure          & $\Omega_0$            & Reference flow domain \\
$\nu$               & Kinematic viscosity       & $\Omega$              & Flow domain            \\
$\bosy{n}$          & Outward normal vector     & $\Gamma^\text{i}$     & Inlet boundary \\
$\bosy{g}$          & Inlet boundary condition  & $\Gamma^\text{o}$     & Outlet boundary        \\
$\tilde{\bosy{u}}$  & Measured flow velocity    & $\Gamma^\text{w}$     & Wall boundary \\
\hline\hline
\end{tabular}
\caption{Notation table for problem formulation.}
\label{tb:notation}
\end{table}

The main objective of this work is to reconstruct the velocity field over the vessel lumen region, given only a measured blood flow image. To do so, the model should not only try to extract information from the measured data but also try to satisfy its fluid physics, which is generally described by the incompressible Navier-Stokes equation \cite{quarteroni2004mathematical}. Before presenting the model, we first describe the notation that is needed for the discussion. Let $D$ denote the full image domain, and let $\tilde{\boldsymbol{u}}$ represent the noisy image obtained from measurements. We describe the lumen domain of the blood vessel as $\Omega\subset D$. Within this domain $\Omega$. The boundary of the vessel domain $\partial \Omega$ is divided into three parts: the inlet $\Gamma^\text{i}$, the outlet $\Gamma^\text{o}$, and the wall of the vessel  $\Gamma^\text{w}$, satisfying $\partial \Omega = \Gamma^\text{i} \cup \Gamma^\text{o} \cup \Gamma^\text{w}$.

In this inverse problem setting, only the noisy velocity field $\tilde{\boldsymbol{u}}$ is available from the imaging data, while the underlying flow domain $\Omega$, the inlet boundary condition $\boldsymbol{g} \in C(\Gamma^\text{i})$, and the true velocity field $\boldsymbol{u}$ are unknown. The goal is to reconstruct $\Omega$, $\boldsymbol{g}$, and $\boldsymbol{u}$ such that the modeled velocity field is consistent with the observed data. To quantify the data misfit, we define a discrepancy functional that measures the $L^2$ distance between the measured velocity and the simulated one as
\begin{equation}
\mathbf{L}_\text{data}(\boldsymbol{u}) = \|\tilde{\boldsymbol{u}} - \boldsymbol{u}\|_{L^2(D)}^2,
\label{eq:data_error}
\end{equation}
The modeled velocity field is required to satisfy the steady incompressible Navier–Stokes equations in the domain $\Omega$. Specifically, the velocity $\boldsymbol{u}$ associated with a  pressure $p$ satisfies the system
\begin{equation}
\begin{aligned}
\left\{\begin{aligned}
    \boldsymbol{u} \cdot \nabla \boldsymbol{u} -  \nabla p + \nu \Delta \boldsymbol{u} =\boldsymbol{0}\quad &\text{ in } \Omega, \\
    \nabla\cdot \boldsymbol{u} = 0\quad &\text{ in } \Omega,\\
    \boldsymbol{u} = \boldsymbol{g} \quad &\text{ on } \Gamma^\text{i},\\
    (- p + \nu \nabla \boldsymbol{u})\cdot \boldsymbol{n} = \boldsymbol{0} \quad &\text{ on } \Gamma^\text{o},\\
    \boldsymbol{u} = \boldsymbol{0} \quad &\text{ on } \Gamma^\text{w}.\\
\end{aligned}\right.
\end{aligned}
\label{eq:ns}
\end{equation}
for a domain $\Omega \subset D$, where $\nu$ denotes the kinematic viscosity and $\boldsymbol{n}$ is the outward normal vector on the boundary. With this forward model \eqref{eq:ns}, we could define a solution operator $\mathcal{U}_\Omega$ that maps the inlet condition $\boldsymbol{g}$ and the flow domain $\Omega$ to the corresponding velocity field $\boldsymbol{u}$, that is, $\mathcal{U}_\Omega: (\boldsymbol{g}, \Omega) \mapsto \boldsymbol{u}$ \cite{quarteroni2004mathematical}. 

With these settings, we pose the inverse reconstruction problem as a PDE-constrained optimization problem. Specifically, our objective is to recover the domain $\Omega$ and the inlet boundary condition $\boldsymbol{g}$ by minimizing the discrepancy functional subject to the Navier–Stokes equations, that is, we solve 
\begin{equation}
\begin{aligned}
& \min\limits_{\boldsymbol{g},\Omega} \quad \mathbf{L}_\text{data}(\boldsymbol{u}=\mathcal{U}_\Omega\left[\boldsymbol{g},\Omega \right])
\end{aligned}
\end{equation}
However, optimizing the flow domain $\Omega$ is not straightforward and a specific representation is required. Previous works have employed level-set methods \cite{kontogiannis2022joint} and phase-field approaches \cite{aguayo2021distributed}. In our approach, we model the domain using a quasi-conformal mapping, allowing for both topological control of the evolving process and seamless integration into a network-based optimization framework.

For the quasi-conformal mapping that will be introduced later, we need to use a reference domain $\Omega_0$ that has the same topology as the target domain $\Omega$. The actual flow domain $\Omega$ is then expressed by a quasi-conformal mapping $f$ as
\begin{equation}
    \Omega = f(\Omega_0). 
    \label{eq:mapping_domain}
\end{equation}
As a result, with $\partial \Omega_0 = \Gamma_0^\text{i} \cup \Gamma_0^\text{o} \cup \Gamma_0^\text{w}$, the boundaries of the deformed domain are expressed in terms of the reference boundaries as
\begin{equation}
\Gamma^\text{i} = f(\Gamma_0^\text{i}), \quad 
\Gamma^\text{o} = f(\Gamma_0^\text{o}), \quad 
\Gamma^\text{w} = f(\Gamma_0^\text{w}),
\label{eq:mapping_boundary}
\end{equation}
Consequently, we redefine the solution operator to account for the deformation, denoted by $\mathcal{U}_f : (\boldsymbol{g},f) \mapsto \boldsymbol{u}$, mapping a boundary condition $\boldsymbol{g}$ and a quasi-conformal mapping function $f$  to a velocity field $\boldsymbol{u}$ satisfying (\ref{eq:ns}), (\ref{eq:mapping_domain}) and (\ref{eq:mapping_boundary}). The data-fidelity-driven optimization problem is then reformulated as
\begin{equation}
\min\limits_{\boldsymbol{g},f} \quad \mathbf{L}_\text{data}(\boldsymbol{u}=\mathcal{U}_f\left[\boldsymbol{g},f \right]),
\label{eq:opt_reform}
\end{equation}
subject to the Navier–Stokes equations
\begin{equation}
\begin{aligned}
\left\{\begin{aligned}
    \boldsymbol{u} \cdot \nabla \boldsymbol{u} -  \nabla p + \nu \Delta \boldsymbol{u} =\boldsymbol{0}\quad &\text{ in } f(\Omega_0), \\
    \nabla\cdot \boldsymbol{u} = 0\quad &\text{ in } f(\Omega_0),\\
    \boldsymbol{u} = \boldsymbol{g} \quad &\text{ on } f(\Gamma^\text{i}_0),\\
    (- p + \nu \nabla \boldsymbol{u})\cdot \boldsymbol{n} = \boldsymbol{0} \quad &\text{ on } f(\Gamma^\text{o}_0),\\
    \boldsymbol{u} = \boldsymbol{0} \quad &\text{ on } f(\Gamma^\text{w}_0).\\
\end{aligned}\right.
\end{aligned}
\label{eq:ns_reform}
\end{equation}

Despite this reformulation, the joint optimization over multiple coupled variables remains challenging, especially given the dependency on a deforming spatial domain. To mitigate this complexity, we adopt a Gauss–Seidel-style alternating optimization strategy. The optimization is split into two subproblems. The first, termed the fluid subproblem, fixes the current domain and solves for the velocity field $\boldsymbol{u}$, pressure $p$, and the inlet condition $\boldsymbol{g}$. The second, referred to as the geometry subproblem, holds $\boldsymbol{u}$ and $\boldsymbol{g}$ fixed while optimizing the deformation mapping $f$, thus updating the flow domain.

\subsection{Fluid Problem}
\label{sec:problem_fluid}

For the fluid subproblem, we assume that the domain $\Omega$ is fixed by a known mapping $f$, i.e., $\Omega = f(\Omega_0)$. With the geometry held constant, it becomes more tractable to minimize the discrepancy between the observed velocity field $\tilde{\boldsymbol{u}}$ and the modeled solution $\boldsymbol{u}$, which satisfies the Navier–Stokes equations on the fixed domain. In this setting, we redefine the solution operator $\mathcal{U}_g \colon \boldsymbol{g} \mapsto \boldsymbol{u}$ that maps a given inlet boundary condition $\boldsymbol{g}$ to the corresponding velocity field $\boldsymbol{u}$, while implicitly incorporating the fixed mapping $f$. The resulting optimization problem becomes a partial minimization over $\boldsymbol{g}$ with  $\boldsymbol{u} = \mathcal{U}_g\left[\boldsymbol{g}\right]$, formulated as
\begin{equation}
\begin{aligned}
& \min\limits_{\boldsymbol{g}} \quad  \mathbf{L}_\text{data}(\boldsymbol{u}=\mathcal{U}_g\left[\boldsymbol{g}\right]).  
\end{aligned}
\label{eq:fluid}
\end{equation}
Here, the deformation map $f$ is fixed for this minimization problem, reducing the minimization to a problem that only solves for the fluid dynamics.

\subsection{Geometry Problem}

To model the geometry subproblem, which aims to improve the accuracy of domain estimation, it is essential to derive information that guides the optimization of the deformation mapping. Note that although the observed velocity field is corrupted by noise, it still reflects the flow in the true physical domain. Specifically, the reconstructed flow field that satisfies \eqref{eq:ns_reform} represents the template domain, as it is defined over the region $f(\Omega_0)$. Aligning this reconstructed field with the measured one thus drives the domain evolution by deforming the template domain $\Omega_0$. Conceptually, such a registration map serves as an approximation of the domain gradient, thereby guiding the deformation toward a more accurate domain representation.

To this end, we seek a correction mapping $c$ such that the composed domain $c^{-1} \circ f(\Omega_0)$ more closely approximates the true flow domain. By extending the modeled velocity field $\boldsymbol{u}$ with zero values in the region $D\setminus \Omega$, the refinement is achieved through a registration process that aligns $\boldsymbol{u}$ with the observed data $\tilde{\boldsymbol{u}}$ through a registration procedure by minimizing
\begin{equation}
    \mathbf{L}_\text{reg} = \left\|\tilde{\boldsymbol{u}} - \boldsymbol{u} \circ c \right\|_{L^2(D)}^2,
\label{eq:geometry}
\end{equation}
where we observe that when the correction mapping $c$ is an identity mapping, the optimization functional $\left\|\tilde{\boldsymbol{u}} - \boldsymbol{u} \circ c \right\|_{L^2(D)}$ reduces to the original data fidelity functional in $\|\tilde{\boldsymbol{u}} - \boldsymbol{u}\|_{L^2(D)}$. This reflects the ideal scenario in which no further geometric correction is necessary, indicating that the current domain accurately captures the true vascular region where the flow occurs. The theoretical validation of this domain evolution formulation, that it admits a solution minimizing the original objective \eqref{eq:opt_reform} subject to the Navier–Stokes constraints in \eqref{eq:ns_reform}, will be presented later in this section.

Both the correction mapping and the deformation mapping are modeled as quasi-conformal mappings that are diffeomorphic. This design is essential to preserve the topology of the reference domain, ensuring that $\Omega = f(\Omega_0)$ retains the same topological structure as $\Omega_0$. This property allows the model to incorporate morphological priors effectively. Furthermore, this formulation is central to the theoretical analysis presented later.
We give some detailed explanations of the Beltrami mapping we shall use for the deformation in the following. 
\begin{definition}[Quasi-conformal map]
A quasi-conformal map is a map $f: \mathbb{C} \rightarrow \mathbb{C}$ that satisfies the Beltrami equation
\begin{equation}
\frac{\partial f}{\partial \bar{z}}=\mu(z) \frac{\partial f}{\partial z}
\label{eq:beleq}
\end{equation}
for some complex-valued function called Beltrami coefficient $\mu$ satisfying $\|\mu\|_{\infty}<1$ and such that $\frac{\partial f}{\partial z}$ is non-vanishing almost everywhere. The complex partial derivatives are given by
\begin{equation*}
\frac{\partial f}{\partial z}:=\frac{1}{2}\left(\frac{\partial f}{\partial x}-i \frac{\partial f}{\partial y}\right) 
\quad \text{ and } \quad 
\frac{\partial f}{\partial \bar{z}}:=\frac{1}{2}\left(\frac{\partial f}{\partial x}+i \frac{\partial f}{\partial y}\right).
\end{equation*}
\label{def:qc}
\end{definition}
The Beltrami coefficient $\mu$ quantifies the deviation from conformality. If $\mu = 0$ at a point $p$, the map is conformal in a neighborhood around $p$, reducing \eqref{eq:beleq} to the Cauchy–Riemann equations. In this case, $f$ near $p$ can be written as
\begin{equation*}
f(z) = f(p) + f_z(p)(z + \mu(p)\bar{z}),
\end{equation*}
where $f(p)$ is a translation and $f_z(p)$ a conformal scaling. Thus, $\mu$ captures the non-conformal part of $f$, and $f$ is conformal if and only if $\mu \equiv 0$.

Using a quasi-conformal mapping to represent the deformations of the domain, the geometry subproblem can be finally formulated with respect to a correction mapping $c$ as
\begin{equation}
\min\limits_{\boldsymbol{c}} \quad \mathbf{L}_\text{reg} = \|\tilde{\boldsymbol{u}} - \boldsymbol{u}\circ c\|_{L^2(D)}^2, \quad
\text{s.t. } \quad \mu(c) < 1.
\label{eq:minimize_geometry_2}
\end{equation} 

In our formulation, the quasi-conformal map $c$ effectively serves as a domain gradient, producing a smooth topology-preserving deformation that directly drives boundary evolution. This ensures that the reconstructed geometry keeps the same topology as the template — an important advantage in cardiovascular applications, where vessel networks have consistent anatomical structures and the template provides a meaningful morphological prior. Even in rare cases with atypical anatomy, a coarse template can still be extracted (e.g., from a rough segmentation) and subsequently refined via flow-driven registration. Such topology preservation cannot be achieved by phase-based approaches, and level-set methods lack a theoretical foundation for controlling it. Moreover, because the deformation is obtained by minimizing a global registration energy over the entire flow field, the resulting geometry evolution is smooth, stable, and globally coherent. This stands in contrast to the level-set approach, whose evolution is fundamentally local, driven by contour curvature and neighborhood features, making it difficult to enforce global structure and long-range consistency during optimization.

\subsection{Subproblem Formulations Verification}

After decomposing the original problem into the fluid and geometry subproblems, we now describe the alternating procedure used to iteratively solve these subproblems in order to approximate the solution of the original problem.

Let $n$ denote the iteration number. Given a $f_n$, we first solve for iteration $n$ by minimizing the data discrepancy objective \eqref{eq:fluid} subject to the Navier–Stokes constraints \eqref{eq:ns_reform} over the domain $f_n(\Omega_0)$, where the first mapping $f_1$ is initialized (identity mapping in this work). With the obtained $u_n, p_n, g_n$ from the fluid subproblem (\ref{eq:fluid}),  we then solve the geometry subproblem \ref{eq:geometry} for the correction map to get its minimizer $c_n$. This map updates the domain for the next iteration via $f_{n+1} = c_n^{-1} \circ f_n$. The process of alternatively solving $u_n, p_n, g_n$ and $c_n$ then repeats until convergence is met. We also refer to Figure~\ref{fig:pipeline} for the pipeline flow.

This alternating scheme captures the interplay between fluid physics and geometric deformation: The reconstructed velocity field informs the domain correction, which in turn influences the next fluid solution. As the iterations progress, both the flow field $u_n$ and the mapping $f_n$ are expected to converge to some limits that align with the measured data, the governing physics, and anatomical constraints.

To support this methodology, we establish a theory that, under proper assumptions, the converging sequence generated by this alternating procedure admits a partial and local minimizer of the original problem. The full proof is provided in the appendix for completeness.

\begin{definition}
    Let $F(x, y)$ be a function defined on a product space $\mathcal{X} \times \mathcal{Y}$. The partial minimization of $F$ with respect to the variable $y$, for a fixed $x \in \mathcal{X}$, is defined as
    \begin{equation*}
        \min_{y \in \mathcal{Y}} F(x, y).
    \end{equation*}
\end{definition}
\begin{theorem}
Let $\mathcal{H}(D)$ be the group of diffeomorphisms on a compact domain $D$. Let $\Omega_0$ be a domain with smooth boundary and $\tilde{u}$ measured data in $D$.  Let $\{(u_n, p_n, f_n, g_n, c_n)\}_{n=1}^\infty$ be a sequence of solutions generated by an iterative inverse Navier–Stokes reconstruction algorithm, which means 
    \begin{itemize}[wide=2em, leftmargin=*, nosep, before=\leavevmode]
        \item $f_1 \in \mathcal{H}(D)$ is given,
        \item $(u_n, p_n, g_n)$ minimize $\|\tilde{u} - u\|^2_{L^2(D)}$ with fixed $f_n \in \mathcal{H}(D)$ for $(u_n,p_n,g_n,f_n)$ satisfying \eqref{eq:ns_reform},
        \item $f_{n+1} = c_n^{-1} \circ f_{n}$, where $c_n = \arg\min_{c \in \mathcal{H}(D)} \| \tilde{u} - u_n \circ c \|_{L^2(D)}$.
    \end{itemize}
    Assume the following:
    \begin{enumerate}[wide=2em, leftmargin=*, nosep, before=\leavevmode,label=$(\arabic*)$]
        \item The velocity fields converge: $u_n \to u_*$ in $L^{\infty}(D)$,
        \item The domain mappings converge: $f_n \to f_*$ in $L^{\infty}(D)$,
        \item In a neighborhood of $f_*$ in $L^{\infty}(D)$, minimizing $\|\tilde{u} - u\|^2_{L^2(D)}$ under \eqref{eq:ns_reform} admits a unique solution $(u_n, p_n, g_n)$ and $u \in W^{1,\infty}(D)$. The solution operator $\mathcal{U}: f \mapsto u$ is continuous for the $L^\infty(D)$ norm.
        \item For large $n$, the solution $u_n$ is uniformly Lipschitz.
    \end{enumerate}
    Then, the following conclusions hold:
    \begin{enumerate}[wide=2em, leftmargin=*, nosep, before=\leavevmode,label=$(\alph*)$]
        \item The correction mappings converge: $c_n \to \boldsymbol{id}$ in $L^{\infty}(D)$,
        \item The limit $f_*$ is the partial and local minimizer under \eqref{eq:ns_reform} for $f\in\mathcal{H}(D)$,
        \item The limit $u_*$ is the partial and local minimizer under \eqref{eq:ns_reform} for $f\in\mathcal{H}(D)$,
    \end{enumerate}
    \label{thm:main}
\end{theorem}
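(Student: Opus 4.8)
The plan is to prove the three conclusions in the order $(a) \Rightarrow (b) \Rightarrow (c)$, since the convergence of the correction maps to the identity is the mechanism that forces $f_*$ to be a critical point.

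\textbf{Step 1: Convergence $c_n \to \boldsymbol{id}$.} First I would argue that since $f_{n+1} = c_n^{-1}\circ f_n$ and both $f_n, f_{n+1} \to f_*$ in $L^\infty(D)$, composing with $f_n^{-1}$ (which is controlled because $f_n$ lives in $\mathcal{H}(D)$ with uniform Lipschitz bounds by assumption (4), so the inverses are equicontinuous) yields $c_n^{-1} = f_{n+1}\circ f_n^{-1} \to \boldsymbol{id}$, hence $c_n \to \boldsymbol{id}$. The technical care here is to justify that composition is continuous on the relevant subset of $\mathcal{H}(D)$ in the $L^\infty$ topology — this needs the uniform Lipschitz/equicontinuity from assumptions (3)–(4) so that $\|g\circ h_1 - g\circ h_2\|_\infty \le \mathrm{Lip}(g)\|h_1-h_2\|_\infty$ can be applied with a uniform constant, and that the limit of a sequence in $\mathcal{H}(D)$ with uniformly bounded bi-Lipschitz constants is again a diffeomorphism so that $f_*^{-1}$ makes sense.

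\textbf{Step 2: The limit correction map is optimal, so its ``derivative'' vanishes.} By construction $c_n$ minimizes $c \mapsto \|\tilde u - u_n\circ c\|_{L^2(D)}^2$ over $\mathcal{H}(D)$. I would pass to the limit: using $u_n \to u_*$ in $L^\infty$ and $c_n \to \boldsymbol{id}$, together with the uniform Lipschitz bound on $u_n$ (so $u_n\circ c_n \to u_*$ in $L^2$), one gets that $\boldsymbol{id}$ is a minimizer of $c\mapsto \|\tilde u - u_*\circ c\|_{L^2(D)}^2$ over $\mathcal{H}(D)$ — or at least a local minimizer, which is all we need. The key consequence, obtained by a first-variation computation along a one-parameter family $c_t = \boldsymbol{id} + t\,v + o(t)$ of diffeomorphisms generated by a vector field $v$, is the stationarity condition: the $L^2$-gradient of the registration functional at $\boldsymbol{id}$ vanishes, i.e. $\langle \tilde u - u_*,\ \nabla u_* \cdot v\rangle_{L^2(D)} = 0$ for all admissible $v$. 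Because $u_* \in W^{1,\infty}$ this pairing is well defined.

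\textbf{Step 3: Transfer stationarity in $c$ to stationarity in $f$, giving $(b)$ and $(c)$.} Here is where the decomposition identity from the paper does the work: for $c$ near $\boldsymbol{id}$, the velocity field associated with the perturbed domain $c^{-1}\circ f_*(\Omega_0)$ is, to first order, $u_* \circ c$ (this is the whole point of ``the correction map approximates the domain gradient''), and the data term $\mathbf{L}_\text{data}$ evaluated on the domain deformed by $f = c^{-1}\circ f_*$ agrees to first order with $\mathbf{L}_\text{reg}(c)$ evaluated at $u_*$; I would make this precise using the continuity of the solution operator $\mathcal{U}: f\mapsto u$ from assumption (3) and a chain-rule/shape-derivative argument. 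Combining with Step 2, the directional derivative of $f \mapsto \mathbf{L}_\text{data}(\mathcal{U}_f[g_*,f])$ at $f_*$ vanishes in every admissible direction, and since $\boldsymbol{id}$ was a \emph{local} minimizer of the registration problem this derivative information upgrades to $f_*$ being a local minimizer of the partial problem (with $g = g_*$ fixed) subject to \eqref{eq:ns_reform}; that is conclusion $(b)$. Conclusion $(c)$ then follows immediately: $u_* = \mathcal{U}_{f_*}[g_*, f_*]$ by the continuity of $\mathcal{U}$ and uniqueness in assumption (3) (take limits in the fact that $u_n$ solves the fluid subproblem on $f_n(\Omega_0)$), so $u_*$ is the velocity field attached to the locally optimal $f_*$, hence a partial and local minimizer as well.

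\textbf{Main obstacle.} The hardest part is Step 3 — rigorously identifying the first-order behavior of the PDE-constrained objective $\mathbf{L}_\text{data}$ under domain deformation with the registration functional $\mathbf{L}_\text{reg}$ at $u_*$. This is essentially a shape-derivative statement, and it requires that perturbing the domain by a near-identity diffeomorphism $c^{-1}$ perturbs the Navier–Stokes solution, to leading order, by precomposition with $c$; establishing this needs the regularity $u\in W^{1,\infty}$ and continuity of $\mathcal{U}$ from assumption (3), plus control of the boundary terms (inlet $g$ held fixed, traction-free outlet, no-slip wall) under the deformation. A secondary but real difficulty is making the compositions in Step 1 rigorous in $L^\infty$ — one genuinely uses assumption (4)'s uniform Lipschitz control to keep composition continuous and to ensure $f_*$ and $c_n^{-1}$ remain diffeomorphisms in the limit.
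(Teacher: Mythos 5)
Your Step 1 is sound and is essentially the paper's own argument for conclusion (a): the paper isolates it as a lemma stating that $f_n \to f_*$ uniformly (with $f_*$ a diffeomorphism) if and only if $f_{n+1}\circ f_n^{-1} \to \boldsymbol{id}$ uniformly, using uniform convergence of the inverses $f_n^{-1}\to f_*^{-1}$ on the compact set $D$. So up to (a) you are on the paper's track.

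The gap is in Steps 2--3, i.e.\ in how you get conclusion (b). You replace the needed argument by a first-variation/shape-derivative mechanism, and this fails on two counts. First, the identification ``the Navier--Stokes solution on $c^{-1}\circ f_*(\Omega_0)$ is, to first order, $u_*\circ c$'' is not obtainable from the hypotheses: assumption (3) gives only \emph{continuity} of $f\mapsto\mathcal{U}[f]$ in $L^\infty(D)$, not differentiability, and no shape-calculus structure (Hadamard differentiability, control of boundary terms under deformation) is assumed; you flag this yourself as the main obstacle, but without it Step 3 has no content. Second, even granting the stationarity condition $\langle \tilde u - u_*,\ \nabla u_*\cdot v\rangle_{L^2(D)}=0$, a vanishing first variation only makes $f_*$ a critical point, whereas conclusion (b) asserts \emph{local minimality}; upgrading to that requires comparing the \emph{values} of the data misfit $J_{\mathcal{U}}(f)=\|\tilde u-\mathcal{U}[f]\|_{L^2(D)}$ with the registration functional, not their derivatives. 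The paper's proof shows that no first-order information is needed: for $f=c^{-1}\circ f_n$ with $c$ near $\boldsymbol{id}$ it bounds $\|\tilde u-\mathcal{U}[c^{-1}\circ f_n]\|_{L^2(D)} \ge \|\tilde u-u_n\circ c\|_{L^2(D)} - \|u_n\circ c-\mathcal{U}[c^{-1}\circ f_n]\|_{L^2(D)}$, controls the error term by $\tfrac{2\varepsilon}{3}$ using only the $L^\infty$-continuity of $\mathcal{U}$ (assumption (3)) and the uniform Lipschitz bound $\|u_n\circ c-u_n\|_{L^2(D)}\le M\|c-\boldsymbol{id}\|_{L^\infty(D)}\sqrt{|D|}$ (assumption (4)), invokes the optimality of $c_n$ to pass to $\|\tilde u-u_n\circ c_n\|_{L^2(D)}$, relates that to $J_{\mathcal{U}}(f_{n+1})$, and lets $n\to\infty$ to get $\liminf_{f\to f_*}J_{\mathcal{U}}(f)\ge J_{\mathcal{U}}(f_*)$. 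Your Step 2 observation (that $\boldsymbol{id}$ minimizes $c\mapsto\|\tilde u-u_*\circ c\|_{L^2(D)}$ in the limit) is provable with your tools but does not by itself bridge to (b). Finally, (c) does not follow ``immediately'' from (b): local minimality of $u_*$ over the attainable set $\mathcal{A}=\{\mathcal{U}[f]: f\in\mathcal{H}(D)\}$ requires arguing that attainable velocities $L^2$-close to $u_*$ arise from mappings $L^\infty$-close to $f_*$, a step the paper treats with a separate (if terse) $\delta_1,\delta_2$ argument rather than as an automatic consequence.
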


Note that assumptions (1)-(2) stem from empirical observations on our numerical scheme. For all cases presented in this paper, the iterative scheme converges in a manner consistent with these assumptions. The uniqueness and the regularity of the solution to the Navier-Stokes equations (i.e., equation~\eqref{eq:ns_reform} with an appropriate choice of boundary data $\boldsymbol{g}$) are established in \cite[Theorem 3.7.3]{sohr2012navier} and \cite[Theorem 7.1]{kelliher2006navier}. Therefore, given proper smoothness of $\Omega_0$ and $f_n$, assuming the uniqueness of the associated minimization problem and the uniform Lipschitz continuity of the solution sequence is reasonable and consistent with the well-posedness of the PDE constraint.
\section{Numerical Method}

\begin{figure}[t!]
    \centering
    \includegraphics[width=\linewidth]{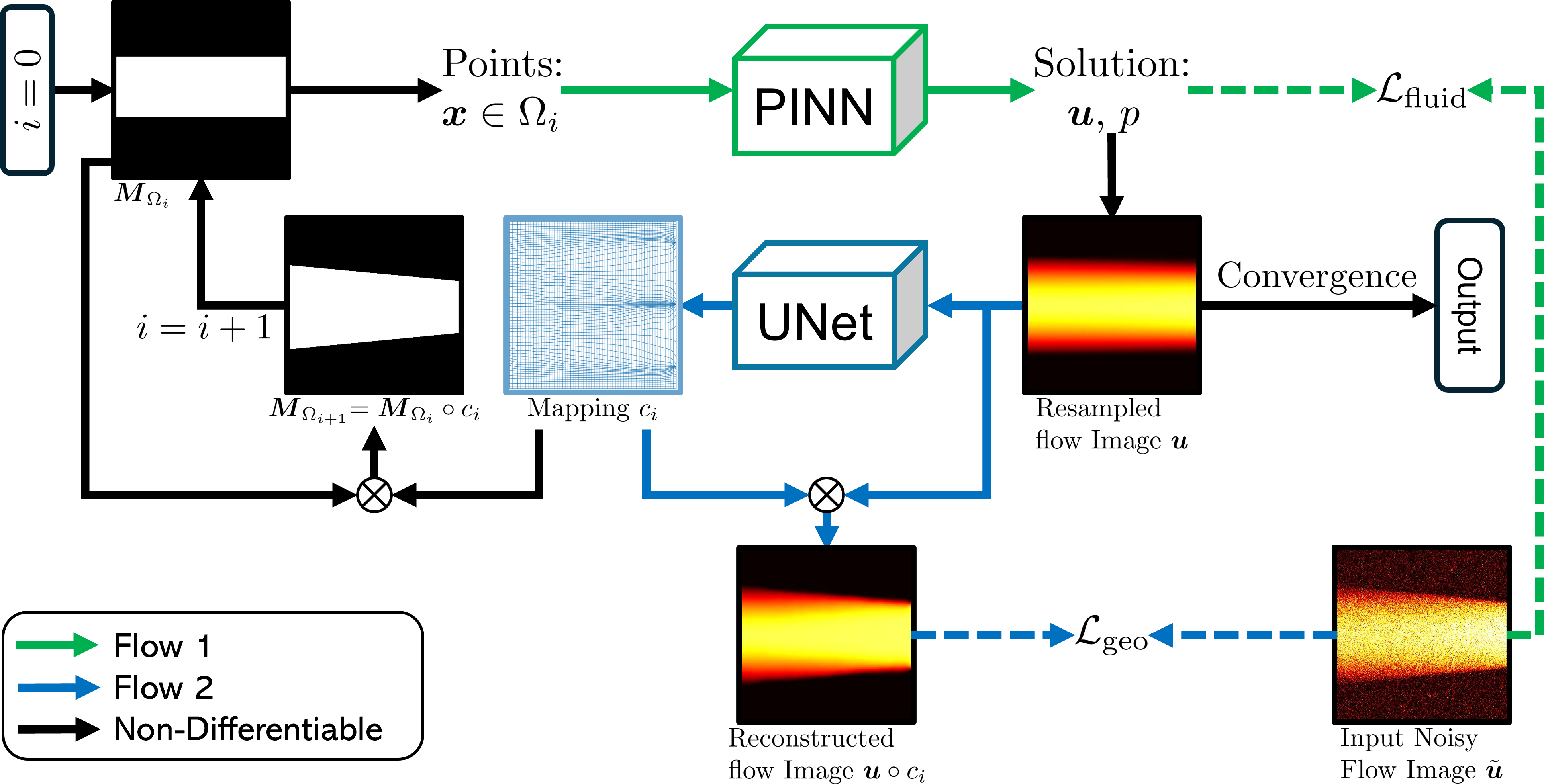}
    \caption{Overview of the iterative pipeline: starting from an initial mask defining the flow domain, points are sampled and input to a PINN for flow prediction. The predicted flow is resampled over the image domain, followed by quasi-conformal mapping to align with the noisy image. The mask is updated via this mapping, and the process repeats until convergence.}
    \label{fig:pipeline}
\end{figure}

In the following sections, we provide specific implementation details, including the point sampling strategy within the region and along the boundaries defined by the template mask, the formulation of the fluid subproblem, and the geometry modeling. Finally, we present the complete algorithm.

\subsection{Mask Sampling for Flow Domain}
\label{sec:method_sampling}
Since a binary mask is used to represent the flow region where blood is expected to circulate, special consideration is required when sampling points for PINN training. We now describe our sampling strategy for the fluid domain $\Omega$ and its boundary $\partial\Omega$ at iteration $n$. Denoting the deformation mapping at this iteration by $f$, the domain is given by $\Omega = f(\Omega_{0})$. Similarly, as in \eqref{eq:mapping_boundary}, the boundary components satisfy
\begin{equation}
\Gamma^\text{i} = f(\Gamma_{0}^\text{i}), \quad 
\Gamma^\text{o} = f(\Gamma_{0}^\text{o}), \quad 
\Gamma^\text{w} = f(\Gamma_{0}^\text{w}),
\end{equation}
The input data consists of a velocity image $\tilde{\boldsymbol{u}}$ that is defined on $[0, \varepsilon H] \times [0, \varepsilon K]$, where $\varepsilon$ is determined by the image resolution. Each pixel point is positioned at coordinates
\begin{equation}
    \mathcal{D} \coloneq \{\left( \varepsilon / 2 + h \cdot \varepsilon, \; \varepsilon / 2 + k \cdot \varepsilon \right) \text{ : where $h=0,\dots,H-1$ and $k=0,\dots,K-1$}\}.
    \label{eq:grid}
\end{equation}
Note that although the value for $\tilde{\boldsymbol{u}}$ is only explicitly defined on the grid points in $\mathcal{D}$, its values at other locations can be obtained through interpolation methods; in this work, bilinear interpolation is used for this purpose.

Let us also denote the mask that is used to represent the domain $\Omega$ as $M_{\Omega}$ defined on pixel points $\mathcal{D}$ as
\begin{equation}
    \boldsymbol{M}_{\Omega} =
    \begin{cases}
        1 \quad \text{for $\boldsymbol{x}\in \mathcal{W}$},\\
        0 \quad \text{for $\boldsymbol{x}\in \mathcal{D}\setminus\mathcal{W}$}.
    \end{cases}
    \label{eq:mask}
\end{equation}
where $\mathcal{W} = \mathcal{D}\cap\Omega$ are the pixel points within the domain $\Omega$. Using an edge detector \cite{parker2010algorithms} on $\boldsymbol{M}_{\Omega}$, which is enforced to be 4-connected, we could then obtain the edge image as $\boldsymbol{M}_{\Gamma}$ illustrated in Figure~\ref{fig:Illustration_boundary}.(a). The point set that was used to approximately represent the boundary of $\Omega$ is as 
\begin{equation*}
    \mathcal{E} = \{ \boldsymbol{x} \in \mathcal{D} : \boldsymbol{M}_{\Gamma}(\boldsymbol{x})=1\} \in \mathcal{G} .
\end{equation*}

\begin{figure}[t!]
    \centering
    \includegraphics[width=\linewidth]{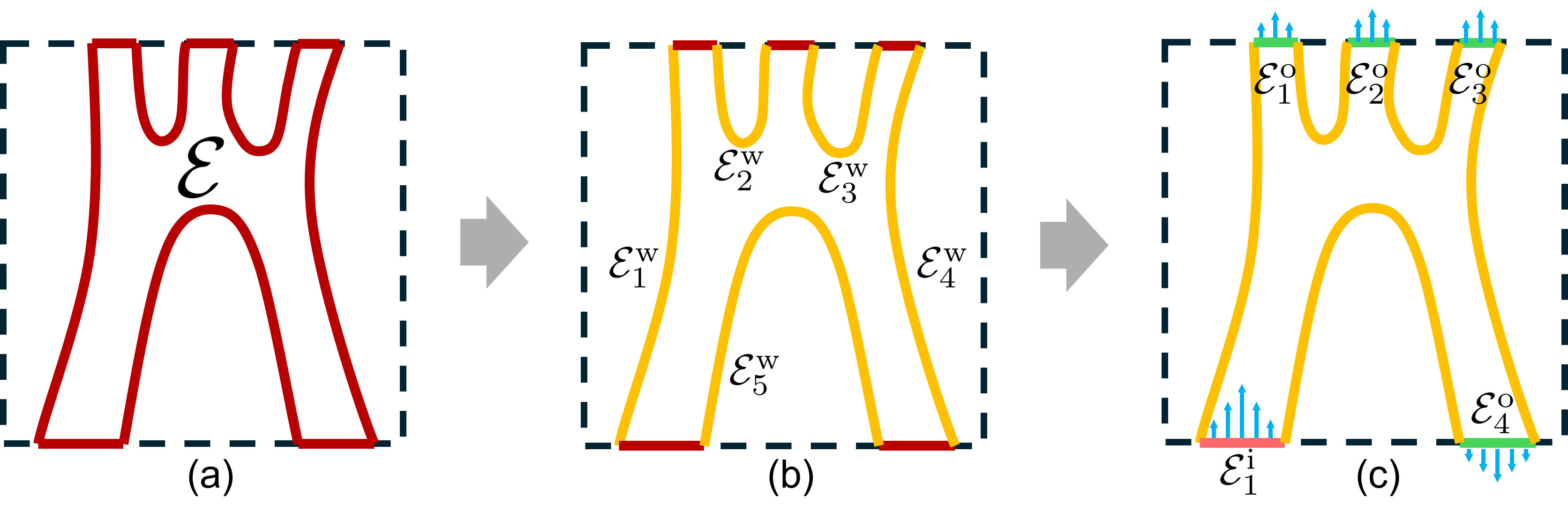}
    \caption{Illustration of the steps used to divide the entire boundary into wall, outlet, and inlet boundaries. Red indicates the full boundary or the unclassified segments; yellow denotes wall boundaries; green represents outlets; and pink indicates inlets.}
    \label{fig:Illustration_boundary}
\end{figure}

The inlet and outlet boundaries are constrained to lie on the edges of the image grid, with coordinates either at $\tfrac{1}{2}\varepsilon$ or $H\varepsilon - \tfrac{1}{2}\varepsilon$. In contrast, the vessel wall boundary can be represented by a set of points that are not on the grid boundary as follows
\begin{equation}
    \mathcal{E}^\text{w} = \{(x_1, x_2) \in \mathcal{E} \;|\; x_1 \notin \{\tfrac{1}{2}\varepsilon, H\varepsilon - \tfrac{1}{2}\varepsilon\};\; x_2 \notin \{\tfrac{1}{2}\varepsilon, K\varepsilon - \tfrac{1}{2}\varepsilon\} \},
\end{equation}
and divide $\mathcal{E}^\text{w}$ into $Z_\text{w}$ disjoint points collections as
\begin{equation*}
    \mathcal{E}^\text{w} = \mathcal{E}^\text{w}_1 \cup \mathcal{E}^\text{w}_2 \cup \dots \mathcal{E}^\text{w}_{Z_\text{w}},
\end{equation*}
where the points of each $\mathcal{E}^\text{w}_z$ represent a curve in the image that is 4-connected, and each set are mutually disjoint.

Furthermore, we also divide $\mathcal{E}\setminus\mathcal{E}^\text{w}$ into multiple disjoint points collection as
\begin{equation*}
\begin{aligned}
    \mathcal{E}^\text{i} &=\mathcal{E}^\text{i}_1 \cup \mathcal{E}^\text{i}_2 \cup \dots \cup \mathcal{E}^\text{i}_{Z_\text{i}},\\
    \mathcal{E}^\text{o} &=\mathcal{E}^\text{o}_1 \cup \mathcal{E}^\text{o}_2 \cup \dots \cup \mathcal{E}^\text{o}_{Z_\text{o}},
\end{aligned}
\end{equation*}
where each of the point sets is 4-connected, and the sets are mutually disjoint. To determine whether a given point collection $\mathcal{E}^*_z$ (where $*\in \{\text{i},\text{o}\}$ corresponds to an inlet or outlet), we first compute the average velocity over the points in the set as follows:
\begin{equation}
    \bar{\boldsymbol{u}}^* = \tfrac{1}{|\mathcal{E}^\text{*}|}\sum\limits_{\boldsymbol{x}\in\mathcal{E}^\text{*}} \boldsymbol{u}(\boldsymbol{x}).
\end{equation}
where $|\mathcal{E}^\text{*}_z|$ represent the number of points in $\mathcal{E}^\text{*}_z$. Then, as in Figure~\ref{fig:Illustration_boundary}.(c), if $\bar{\boldsymbol{u}}^*$ points inward to the image domain, which indicates an inflow pattern, $\mathcal{E}^\text{*}$ should be an inlet boundary. Otherwise, the average velocity points outward from the image domain, indicating an outflow pattern, and thus the collection corresponds to an outlet.

Thus, the pixel points for the interior region of $\Omega$ is as
\begin{equation*}
    \mathcal{G} = \mathcal{W} - \mathcal{E}.
\end{equation*}
and with $\mathcal{G}$ and $\mathcal{E}$ in hand, we can then do sampling for the interior domain $\Omega \setminus\partial\Omega$ and the boundary $\Gamma$.

To sample a new point in the interior domain $\Omega \setminus\partial\Omega$, we could first randomly choose one point $\boldsymbol{p}$ from $\mathcal{G}$. Then, the sampled point is as
\begin{equation}
    \boldsymbol{x}^\text{g} = \boldsymbol{p} + [l_1,l_2]',
    \label{eq:sample_domain}
\end{equation}
where $l_1$ and $l_2$ are randomly sampled from $(-\epsilon,\epsilon)$.

To sample points along a segment of $\mathcal{E}^\text{i}_z$, $\mathcal{E}^\text{o}_z$, and $\mathcal{E}^\text{w}_z$, we apply piecewise linear interpolation. Specifically, let $\mathcal{E}^*$ denote one such segment consisting of an ordered set of points $\{ \boldsymbol{p}_0, \boldsymbol{p}_1, \ldots, \boldsymbol{p}_{N_*} \}$, where each consecutive pair $(\boldsymbol{p}_j, \boldsymbol{p}_{j+1})$ defines a straight-line segment. Then, we first generate a random value $l \in [0, L]$, where $L$ is the total length of the boundary. We then determine which segment the sampled point will lie in by accumulating in each segment's length $S_r$ until $L_j = \sum_{r=1}^j S_r \geq l$ is met, which means the sampled points within segment $j$ as $(p_{j-1},p_j)$. We finally have the sampled point on the boundary as
\begin{equation}
    \boldsymbol{x}^\text{e} = \frac{L_j-l}{L_j - L_{j-1}} \boldsymbol{p}_{j-1} + \frac{l-L_{j-1}}{L_j - L_{j-1}} \boldsymbol{p}_{j}.
    \label{eq:sample_boundary}
\end{equation}

\subsection{Fluid Subproblem}
\label{sec:method_fluid}

\begin{figure}[t!]
    \centering
    \includegraphics[width=\linewidth]{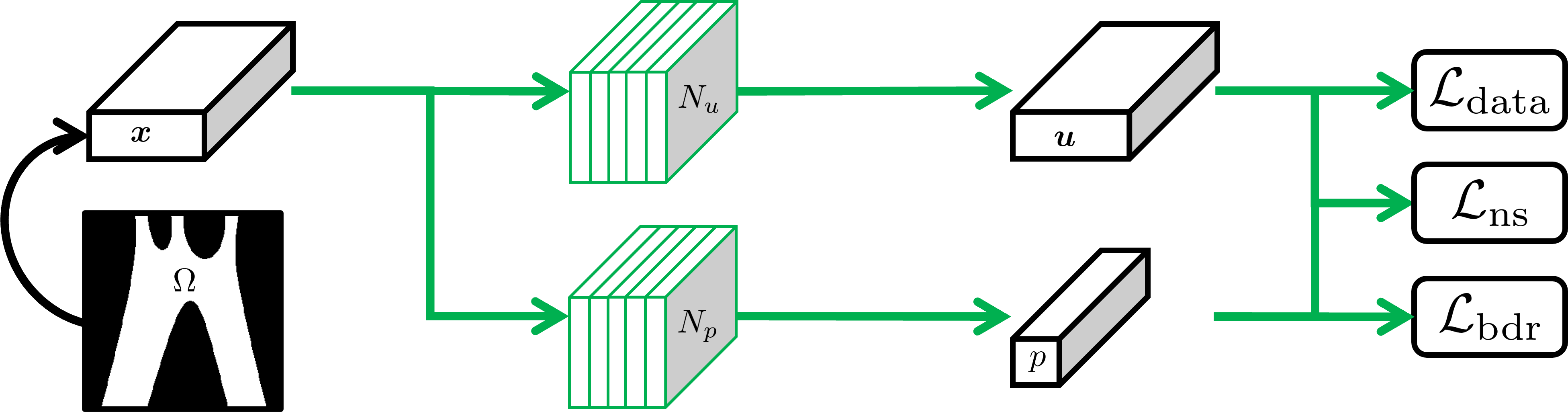}
    \caption{Architecture of the PINN for fluid flow prediction, featuring two separate networks dedicated to velocity and pressure estimation.}
    \label{fig:network_fluid}
\end{figure}

We describe some details for optimizing the fluid subproblems \eqref{eq:fluid} for each iteration $n$ through Physics-Informed Neural Networks. For convenience, we denote the current spatial domain as $\Omega$, omitting the iteration index. This represents $\Omega_n$ at iteration $n$. We first randomly sample $M^\text{g}$ points from the inner domain as $\boldsymbol{x}^\text{g}_m\in\Omega \setminus\partial\Omega$ according to \eqref{eq:sample_domain} for $m=1,\dots,M^\text{g}$. Regarding the boundaries, we randomly sample the coordinates $M^\text{i}$, $M^\text{o}$ and $M^\text{w}$ at the inlet, outlet and wall boundary as $\boldsymbol{x}^\text{i}_m\in\Gamma^\text{i}$, $\boldsymbol{x}^\text{o}_m\in\Gamma^\text{o}$ and $\boldsymbol{x}^\text{w}_m\in\Gamma^\text{w}$ according to \eqref{eq:sample_boundary}, respectively. We then introduce two networks as given in Figure~\ref{fig:network_fluid}: one for the velocity computation, named $\mathcal{N}_u$, and another for the pressure computation, named $\mathcal{N}_p$. We use $\boldsymbol{\theta}_{u}$ and $\boldsymbol{\theta}_{p}$ the trainable parameters for these two neural networks. The two networks map any point $\boldsymbol{x}\in \Omega$ to the values of the velocity and pressure at this point, respectively. Omitting thesubscripts of the sampled points, the flow velocity and pressure for the point $\boldsymbol{x}^\text{g}$ can be calculated as
\begin{equation}
\begin{aligned}
    \boldsymbol{u}^\text{g}(\boldsymbol{x}^\text{g}; {\boldsymbol{\theta}}_{u}) &= \mathcal{N}_u(\boldsymbol{x}^\text{g}; {\boldsymbol{\theta}}_{u}),\\
    p^\text{g}(\boldsymbol{x}^\text{g};  \boldsymbol{\theta}_{p}) &= \mathcal{N}_p(\boldsymbol{x}^\text{g}; {\boldsymbol{\theta}}_{p}),
\end{aligned} 
 \label{eq:fluidnetworkoutput}
\end{equation}
where for the boundaries, the notations are changed correspondingly as $\boldsymbol{u}^\text{i}$, $\boldsymbol{u}^\text{o}$, $\boldsymbol{u}^\text{w}$ and $p^\text{o}$. Also, let's define the discretized norm used throughout. Let $U$ denote one of the relevant domains: $\Omega$, $\Gamma^\text{i}$, $\Gamma^\text{o}$, or $\Gamma^\text{w}$. For each domain $U$, let $\{\boldsymbol{x}_m\}_{m=1}^M$ represent the corresponding set of sampled points (indexed appropriately). Given a function $F : U \to \mathbb{R}$, we define the discrete $L^2$ norm as
\begin{equation}
\|F\|_{L^2(U;M)} \coloneq \left( \frac{1}{M} \sum_{m=1}^M |F(\boldsymbol{x}_m)|^2 \right)^{1/2} \quad \text{ where } x_m \sim U.
\end{equation}

{To train the networks, in addition to the measured data, the governing partial differential equations should also be incorporated into the loss function through their residuals to make the neural networks physics-informed. Since the networks take spatial coordinates as input and predict physical quantities such as velocity and pressure, automatic differentiation (e.g., in PyTorch or TensorFlow) can be used to compute the derivatives required to evaluate these residuals. We then present the details of these residuals, which together constitute the loss function.}

As the modeled velocity is extended by zero value over $D\setminus\Omega$, the data fitting term is thus only defined in the domain $\Omega$ as
\begin{equation}
\mathcal{L}_{\text{data}}(\boldsymbol{\theta}_{u} , \boldsymbol{\theta}_{p}) = \|\tilde{\boldsymbol{u}} - \boldsymbol{u}^\text{g}\|^2_{L^2(\Omega;M^\text{g})}.
\end{equation}
To make the process physics-informed, we rewrite the Navier-Stokes equation into residual forms
\begin{equation}
\begin{aligned}
\mathcal{L}_{\text{ns}}(\boldsymbol{\theta}_{u} , \boldsymbol{\theta}_{p}) &= \|\boldsymbol{u}^\text{g} \cdot \nabla \boldsymbol{u}^\text{g} -  \nabla p^\text{g} + \nu \Delta \boldsymbol{u}^\text{g} \|^2_{L^2(\Omega;M^\text{g})}\\
&+ \|\nabla\cdot\boldsymbol{u}^\text{g}\|^2_{L^2(\Omega;M^\text{g})},\\
\end{aligned}
\label{eq:method_fluid_ns}
\end{equation}
and its associated boundary conditions into residual form as
\begin{equation}
\begin{aligned}
\mathcal{L}_{\text{bdr}}(\boldsymbol{\theta}_{u} , \boldsymbol{\theta}_{p}) 
    =\|\boldsymbol{u}^\text{i} - \boldsymbol{g}^\text{i} \|^2_{L^2(\Gamma^\text{i};M^\text{i})}
    +\|(- p^\text{o} + \nu \nabla \boldsymbol{u}^\text{o})\cdot \boldsymbol{n}  \|^2_{L^2(\Gamma^\text{o};M^\text{o})}
    +\| \boldsymbol{u}^\text{w} \|^2_{L^2(\Gamma^\text{w};M^\text{w})}.
\end{aligned}
\label{eq:method_fluid_bdr}
\end{equation}
We also assume that the inlet velocity boundary follows a parabolic profile. Therefore, the inlet velocity prediction should be regularized to be a parabolic velocity profile whose average velocity matches the average noisy velocity over $\Gamma^\text{i}$, given by
\begin{equation*}
\boldsymbol{g}^\text{r}(\boldsymbol{x}) = \tfrac{3}{2}\tfrac{\int_{\Gamma^\text{i} }\tilde{\boldsymbol{u}}}{|\Gamma^\text{i}|} \left(1 - \tfrac{|\boldsymbol{q} - \boldsymbol{x}|^2}{(|\Gamma^\text{i}| / 2)^2}\right),
\end{equation*}
where $\boldsymbol{q}$ is the centroid of $\Gamma^\text{i}$ and $|\Gamma^\text{i}|$ represents the length of it. This leads to an additional regularization for the inlet velocity as
\begin{equation}
    \mathcal{L}_\text{in}    = \|\boldsymbol{u}^\text{i} - \boldsymbol{g^\text{r}}\|_{L^2(\Gamma^\text{i};M^\text{i})}^2,
\label{eq:method_fluid_gr}
\end{equation}
Integrating the losses from both the flow and deformation problems with proper boundary losses, we obtain
\begin{equation}
\mathcal{L}_{\text{fluid}}(\boldsymbol{\theta}_{u} , \boldsymbol{\theta}_{p}) = \alpha_{\text{data}} \mathcal{L}_{\text{data}} + \alpha_{\text{ns}} \mathcal{L}_{\text{ns}} + \alpha_{\text{bdr}} \mathcal{L}_{\text{bdr}} + \alpha_{\text{in}} \mathcal{L}_\text{in},
\label{eq:method_fluid_losses}
\end{equation}
where $\alpha_{\text{data}}$, $\alpha_{\text{ns}}$, $\alpha_{\text{bdr}}$ and $\alpha_{\text{in}}$ are the weights for reconstruction error, the Navier-Stokes equations, the boundary condition and inlet velocity regularization respectively. Then, the fluid optimization problem is modeled as follows,
\begin{equation}
    \min _{\boldsymbol{\theta}_{u} , \boldsymbol{\theta}_{p}} \mathcal{L}_{\text{fluid}}(\boldsymbol{\theta}_{u} , \boldsymbol{\theta}_{p}). 
    \label{eq:method_fluid_minimization}
\end{equation}

To reduce the loss values, gradient descent methods are used to update $\boldsymbol{\theta}_{u}$ and $\boldsymbol{\theta}_{p}$ with the help of two distinct optimizers by
\begin{equation}
\begin{aligned}
&\boldsymbol{\theta}_{u} \leftarrow \boldsymbol{\theta}_{u} - \tau_{u} \nabla_{\boldsymbol{\theta}_{u}} \mathcal{L}_{\text{fluid}}(\boldsymbol{\theta}_{u} , \boldsymbol{\theta}_{p}),\\
&\boldsymbol{\theta}_{p} \leftarrow \boldsymbol{\theta}_{p} - \tau_{p} \nabla_{\boldsymbol{\theta}_{p}} \mathcal{L}_{\text{fluid}}(\boldsymbol{\theta}_{u} , \boldsymbol{\theta}_{p}).
\end{aligned}
\label{eq:optimizer_fluid}
\end{equation}
where $\nabla_{\boldsymbol{\theta}_{u}}$ and $\nabla_{\boldsymbol{\theta}_{p}}$ are the gradients with respect to $\boldsymbol{\theta}_{u}$ and $\boldsymbol{\theta}_{p}$, which are computed by autogradient within \textit{PyTorch}. Here, $\tau_{u}$ and $\tau_{p}$ represent adaptive step sizes determined by various methods \cite{amari1993backpropagation, KingBa15}, with step sizes computed according to the approach outlined in \cite{KingBa15}.

After sufficiently training the neural networks $\mathcal{N}_u$ and $\mathcal{N}_p$ in iteration $n$ by minimizing \eqref{eq:method_fluid_losses} with $\Omega = \Omega_n$ in \eqref{eq:method_fluid_ns}--\eqref{eq:method_fluid_gr}, we sample the reconstructed flow image over the entire image grid $\mathcal{D}$ (defined in \eqref{eq:grid} for use in the geometry subproblem to be detailed in the next subsection. The sampled velocity field $\hat{\boldsymbol{u}}$ is defined as:
\begin{equation}
\begin{aligned}
    \hat{\boldsymbol{u}}(\boldsymbol{x}) = \begin{cases}
        \boldsymbol{u}^\text{g}(\boldsymbol{x}), & \text{if } \boldsymbol{x} \in \mathcal{G}, \\
        \boldsymbol{u}^\text{i}(\boldsymbol{x}), & \text{if } \boldsymbol{x} \in \mathcal{E}^\text{i}, \\
        \boldsymbol{u}^\text{o}(\boldsymbol{x}), & \text{if } \boldsymbol{x} \in \mathcal{E}^\text{o}, \\
        \boldsymbol{u}^\text{w}(\boldsymbol{x}), & \text{if } \boldsymbol{x} \in \mathcal{E}^\text{w}, \\
        0, & \text{otherwise}.
    \end{cases}
\end{aligned}
\label{eq:un}
\end{equation}
Here, $\mathcal{G}$, $\mathcal{E}^\text{i}$, $\mathcal{E}^\text{o}$, and $\mathcal{E}^\text{w}$ denote the sets of points in the discretized image grid corresponding to $\Omega$, $\Gamma^\text{i}$, $\Gamma^\text{o}$, and $\Gamma^\text{w}$ as described in Section~\ref{sec:method_sampling}, respectively.

\subsection{Geometry Subproblem}

\begin{figure}[t!]
    \centering
    \includegraphics[width=0.85\linewidth]{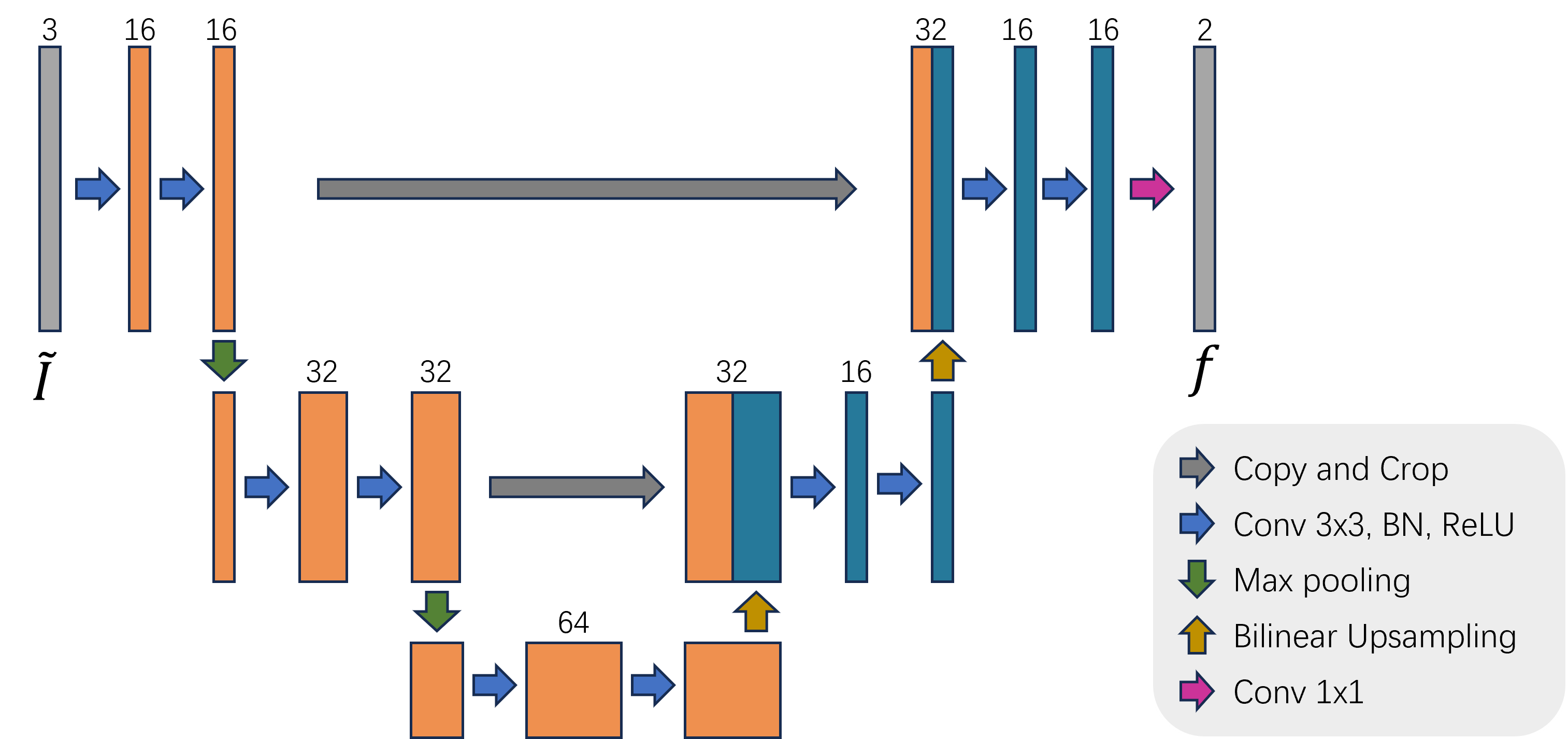}
    \caption{Architecture of the UNet used for quasi-conformal mapping estimation.}
    \label{fig:QCE}
\end{figure}

We now address the geometry subproblem for each iteration $n$. We first obtain the value of solved $\hat{\boldsymbol{u}}$ from the fluid subproblem. Then, we also define the norm over the discretized grid $\mathcal{D}$ for a function $F : \mathcal{D} \to \mathbb{R}$ as
\begin{equation}
\|F\|_{L^2(\mathcal{D})} = \left( \frac{1}{HK}\sum_{\boldsymbol{x} \in \mathcal{D}} |F(\boldsymbol{x})|^2 \right)^{1/2},
\end{equation}
As discussed in the previous section, we need to register the image $\hat{\boldsymbol{u}}$ obtained from the solution of the fluid subproblem \eqref{eq:method_fluid_minimization} to the captured noisy image $\tilde{\boldsymbol{u}}$ to produce a mapping $c$, which serves as an approximation of the domain gradient. This mapping is generated by a neural network
\begin{equation}
    c = \mathcal{N}_c(\hat{\boldsymbol{u}};\boldsymbol{\theta}_c)
\end{equation}
where $\boldsymbol{\theta}_c$ is the trainable weights of the network. Following the approach of \cite{zhang2026quasi,zhang2024learning}, we use a U-Net architecture for $\mathcal{N}_c$, as illustrated in Figure~\ref{fig:QCE}, to estimate the quasi-conformal mapping. To ensure accurate registration, the network is trained by minimizing the following registration loss.
\begin{equation}\label{eq:method_geometry_register}
    \mathcal{L}_{\text{reg}}(\boldsymbol{\theta}_c) = \|\tilde{\boldsymbol{u}} - \hat{\boldsymbol{u}}\circ c\|^2_{L^2(\mathcal{D})},
\end{equation}
For $c$ to be a quasi-conformal mapping, its associated Beltrami coefficient must be strictly less than 1 according to Definition~\ref{def:qc}. In addition, to promote the mapping $c$ to be diffeomorphic, we apply Laplacian regularization to encourage smoothness and control geometric distortion. Thereby, the regularization terms are introduced as
\begin{equation}\label{eq:losses}
\begin{aligned}
    \mathcal{L}_{\text{bc}}(\boldsymbol{\theta}_c) &= \|e^{\mu(c)-1}\|^2_{L^2(\mathcal{D})}, \\
    \mathcal{L}_{\text{lap}}(\boldsymbol{\theta}_c) &= \|\Delta c\|^2_{L^2(\mathcal{D})},
\end{aligned}
\end{equation}
where $\mu(c)$ represents the Beltrami coefficient of the mapping $c$, computed through a finite difference  implementation of Equation \eqref{eq:beleq}. The term $\Delta c$ denotes the Laplacian of the mapping $c$. Together, these regularizations ensure that $c$ adheres to quasi-conformal constraints while maintaining desirable geometric properties, such as continuity and smoothness.

The overall objective function for the geometry subproblems \eqref{eq:geometry} is then replaced by
\begin{equation}
\mathcal{L}_{\text{geo}}(\boldsymbol{\theta}_c) = \alpha_{\text{reg}} \mathcal{L}_{\text{reg}}
    +\alpha_{\text{bc}} \mathcal{L}_{\text{bc}}
    +\alpha_{\text{lap}} \mathcal{L}_{\text{lap}},
    \label{eq:method_geometry_losses}
\end{equation}
where $\alpha_{\text{reg}}$, $\alpha_{\text{bc}}$, and $\alpha_{\text{lap}}$ are the weighting parameters for the registration error, Beltrami regularization, and Laplacian regularization, respectively. Then, the geometry subproblem is reformulated as
\begin{equation}
    \min _{\boldsymbol{\theta}_c} \mathcal{L}_{\text{geo}}(\boldsymbol{\theta}_c). 
    \label{eq:method_geometry_minimization}
\end{equation}
To reduce the loss values, gradient descent methods are used to update $\boldsymbol{\theta}_{c}$ by
\begin{equation}
\boldsymbol{\theta}_{c} \leftarrow \boldsymbol{\theta}_{c} - \tau_{c} \nabla_{\boldsymbol{\theta}_{c}} \mathcal{L}_{\text{geo}}(\boldsymbol{\theta}_{c}),
\label{eq:optimizer_geometry}
\end{equation}
where $\nabla_{\boldsymbol{\theta}_{c}}$ is the gradient with respect to $\boldsymbol{\theta}_{c}$, $\tau_{c}$ represents adaptive step sizes as in \cite{amari1993backpropagation}. After sufficient training, the produced mapping is used to warp the current mask, thereby refining the vessel region. This updated domain is then used in the next iteration of fluid subproblem training.

\subsection{Overall Algorithm}

The proposed algorithm iteratively alternates between solving the flow subproblem \eqref{eq:method_fluid_minimization} and the geometry subproblem \eqref{eq:method_geometry_minimization} to achieve accurate blood flow imaging. As a result, the overall optimization problem \eqref{eq:opt_reform}, subject to the constraint in \eqref{eq:ns_reform}, is approximately solved through this alternating scheme between the fluid subproblem \eqref{eq:method_fluid_minimization} and the geometry subproblem \eqref{eq:method_geometry_minimization}.

We begin with the fluid subproblem using the domain mask $M_{\Omega_n}$, which represents the mask at iteration $n$, as defined in \eqref{eq:mask}. The initial mask $M_{\Omega_1}$, provided at the beginning, serves as both a topological prior and a coarse approximation of the spatial domain in which blood flow is expected to occur. Based on the current mask $M_{\Omega_n}$, spatial points are sampled and used as input to the PINN models $\mathcal{N}_u$ and $\mathcal{N}_p$ in \eqref{eq:fluidnetworkoutput} used for \eqref{eq:method_fluid_minimization}. These two networks are then trained on the sampled points by minimizing the fluid loss function defined in \eqref{eq:method_fluid_losses}, using the gradient descent described in \eqref{eq:optimizer_fluid}. Once sufficiently trained, the models produce a velocity field that adheres to the governing physical laws \eqref{eq:ns_reform} while fitting the noisy observational data $\tilde{u}$.

After training, we evaluate the predicted velocity at the points of the image grid $\mathcal{D}$ (as defined in \eqref{eq:grid}), which produces a reconstructed flow image $\hat{\boldsymbol{u}}_n$, which corresponds to the indexed sampled velocity image at iteration $n$, denoted by $\hat{\boldsymbol{u}}$ in \eqref{eq:un}. To correct for potential misalignments between the solved domain of this iteration and the actual vessel geometry, we estimate a quasi-conformal registration mapping that aligns $\hat{\boldsymbol{u}}_n$ with the observed flow $\tilde{\boldsymbol{u}}$. This is achieved by training the mapping network $\mathcal{N}_c$ to minimize the registration loss \eqref{eq:method_geometry_losses}, using the gradient descent scheme in \eqref{eq:optimizer_geometry}. After sufficient training, the network outputs a registration mapping $c_n$, which is used to update the vessel mask by warping the previous domain: $M_{\Omega_{n+1}} = M_{\Omega_n} \circ c_n$. As shown in Remark~\ref{remark:warp}, this is equivalent to $M_{\Omega_{n+1}} = M_{c_n^{-1}(\Omega_n)}$. The updated mask $M_{\Omega_{n+1}}$ then replaces the previous one and serves as the domain mask for the next iteration, as illustrated in Figure~\ref{fig:pipeline}, enabling a new round of point sampling for the fluid subproblem training. 

This alternation between the PINN-based flow reconstruction problem \eqref{eq:method_fluid_minimization} and domain refinement through the quasi-conformal registration problem \eqref{eq:method_geometry_minimization}  continues iteratively. The algorithm terminates when either a convergence criterion is satisfied or the maximum number of iterations is reached. The overall algorithm is summarized in Algorithm \ref{alg:overall} for a clear presentation.

\begin{remark}\label{remark:warp}
The two formulations
\begin{equation}
M_{\Omega_{n+1}} = M_{c_n^{-1}(\Omega_n)} \quad \text{and} \quad M_{\Omega_{n+1}} = M_{\Omega_n} \circ c_n
\end{equation}
are equivalent under the interpretation of $M_{\Omega}$ as the characteristic function of the domain $\Omega$. This equivalence is seen by evaluating the composition:
\begin{equation}
M_{\Omega_n} \circ c_n(x) = 1 
\quad \Longleftrightarrow \quad c_n(x) \in \Omega_n 
\quad \Longleftrightarrow \quad x \in c_n^{-1}(\Omega_n).
\end{equation}
Therefore, $M_{\Omega_n} \circ c_n$ is the characteristic function of $c_n^{-1}(\Omega_n)$, which justifies the update $M_{\Omega_{n+1}} = M_{\Omega_n} \circ c_n$ as a computationally convenient form to generate the warped mask. 
\end{remark}

\begin{algorithm}
\caption{The main algorithm}
\label{alg:overall}
\begin{algorithmic}[1]
\Require Measured flow image $\tilde{\boldsymbol{u}}$, reference mask $M_{\Omega_0}$;
\Ensure Reconstructed flow image $\boldsymbol{u}$, reconstructed mask $M_{\Omega}$;
\State $M_{\Omega_1} \gets M_{\Omega_{0}}$;
\State $\boldsymbol{\theta}_u,\boldsymbol{\theta}_p,\boldsymbol{\theta}_c \gets$ random initialization;
\For{$n=1:N_\text{MAX}$}
    \State Sample points using $M_{\Omega_n}$ as described in Section \ref{sec:method_sampling};
    \State $\boldsymbol{\theta}_u,\boldsymbol{\theta}_p \gets $ gradient descent as \eqref{eq:optimizer_fluid} with the sampled points;
    \For{each $\boldsymbol{x}\in\mathcal{D}$ (defined in \eqref{eq:grid})}
        \State $\hat{\boldsymbol{u}}(\boldsymbol{x}) \gets \mathcal{N}_u(\boldsymbol{x};\boldsymbol{\theta}_u)$ ;
    \EndFor
    \State $\hat{\boldsymbol{u}}_n \gets \hat{\boldsymbol{u}}$;    
    \If{$\|\tilde{\boldsymbol{u}}-\hat{\boldsymbol{u}}_n\|^2_{L^2(\mathcal{D})}$ converge}
        \State $\boldsymbol{u} \gets \hat{\boldsymbol{u}}_n$, $M_{\Omega} \gets M_{\Omega_n}$;
        \State Finish and output;
    \EndIf
    \State $\boldsymbol{\theta}_c \gets $ gradient descent as \eqref{eq:optimizer_geometry} with $\hat{\boldsymbol{u}}$;
    \State $c_n \gets \mathcal{N}_c(\hat{\boldsymbol{u}}_n;\boldsymbol{\theta}_c)$;
    \State $M_{\Omega_{n+1}} \gets M_{\Omega_n} \circ c_n$;
\EndFor
\end{algorithmic}
\end{algorithm}
\section{Experiment}

\subsection{Implementation Details}\,

\textbf{Data Synthesis}
Synthetic data were generated using MATLAB FEATool with the  inlet velocity profile
\begin{equation}
\boldsymbol{g}(\boldsymbol{x}) = v \left(1 - \tfrac{|\boldsymbol{q} - \boldsymbol{x}|^2}{(|\Gamma^\text{i}| / 2)^2}\right),
\end{equation}
where $v$ is a velocity parameter chosen for a specific experiment, $\boldsymbol{q}$ denotes the centroid of the inlet boundary $\Gamma^\text{i}$, and $|\Gamma^\text{i}|$ represents its length (in 2D). All synthesized images were of size $256 \times 256$. The spatial resolution for each experiment is reported individually.

\textbf{Noise Model}
To simulate the measurement noise in MRI flow imaging, we adopted a signal-dependent noise model combining Gaussian components
\begin{equation}
\tilde{\boldsymbol{u}} = \boldsymbol{u} + \sqrt{|\boldsymbol{u}|} \cdot \boldsymbol{n}_1 + \boldsymbol{n}_2,
\label{eq:noise_model}
\end{equation}
where $\boldsymbol{n}_1 \sim \mathcal{N}(0, \sigma_1^2)$ and $\boldsymbol{n}_2 \sim \mathcal{N}(0, \sigma_2^2)$ are two independent Gaussian noise terms.

\textbf{Network Architecture}
The PINN model consists of two separate MLPs for velocity and pressure. The velocity network has 8 layers with 40 neurons per layer, while the pressure network also has 8 layers with 20 neurons per layer. Increasing the number of layers and neuros can improve expressiveness, but can lead to a higher computational cost without significant performance gains \cite{zhang2024meshless}. The U-Net architecture used to generate the correction mapping is shown in Figure \ref{fig:QCE} and has been demonstrated to be effective for producing high-quality quasi-conformal mappings in previous works \cite{zhang2021topology,zhang2025deformation}.

\textbf{Error Metric}
To evaluate the performance of our method, we use several metrics, including Mean Squared Error (MSE), Relative Error (RE), Peak Signal-to-Noise Ratio (PSNR), Structural Similarity Index (SSIM), Dice coefficient, and 95th percentile Hausdorff distance (HD95). Detailed definitions of these metrics are provided in Appendix~\ref{app:metric}.

\textbf{Computational Platform} Our methods were implemented in all experiments using the \textit{Python} programming language in combination with the \textit{PyTorch} framework. The experiments were conducted on a Windows 11 PC equipped with an Nvidia 4060 Ti GPU and a 13th Gen Intel(R) Core(TM) i7-13700KF 3.40 GHz CPU.

\textbf{Parameters} We set the weights of the composite loss function as follows if not specified: for fluid subproblem, $\alpha_\text{data} = 1.0$, $\alpha_\text{pde}=1.0\times 10^{-3}$, $\alpha_\text{bd}=1.0$, $\alpha_\text{in}=0.1$; for geometry subproblem, $\alpha_\text{reg}=1.0$, $\alpha_\text{bc}=1.0\times 10^{1}$, $\alpha_\text{lap}=1$.

\subsection{Flow Image over Converging Channel}

\begin{figure}
    \centering
    \includegraphics[width=\linewidth]{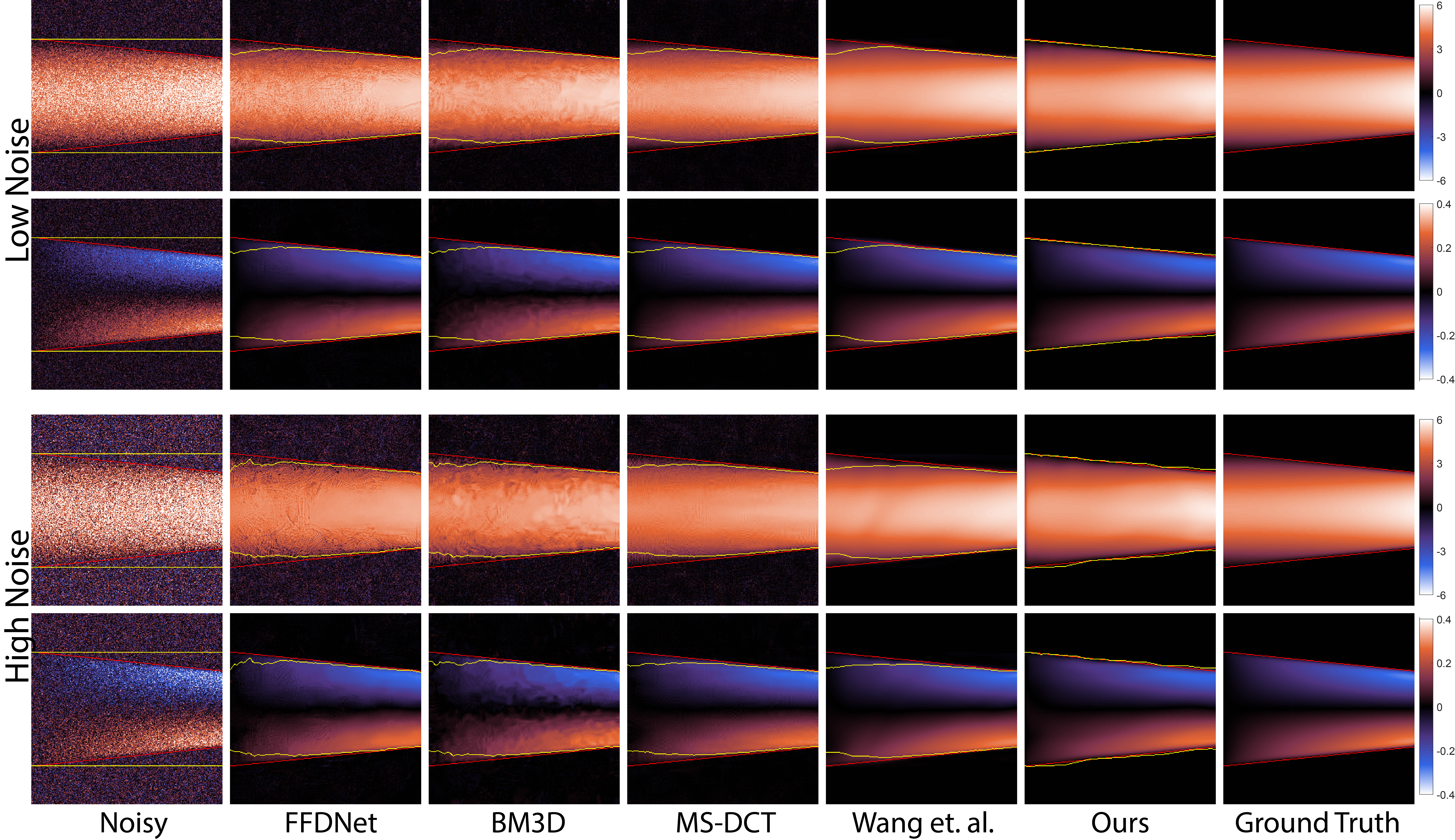}
    \caption{Qualitative comparison of flow and region reconstruction. Red contours show ground truth; yellow contours indicate predicted domains. Top two rows: low noise group; bottom two: high noise group. Each pair shows $x$- (top) and $y$-direction (bottom) velocity. Yellow contour in "Noisy" column marks the initialization.}
    \label{fig:exp_convCH}
\end{figure}
\begin{table}[!htp]
\centering
\begin{tabular}{c|c|cccc|ccc}
\hline
        &           &  \multicolumn{4}{c}{Flow Image Reconstruction}&  \multicolumn{3}{c}{Fluid Domain Segmentation}    \\\cline{3-9}
        &           &  RE       &  MSE      &  PSNR     &  SSIM     &  MSE      &  Dice     &  HD95  \\\hline
        &   Noise   & 0.4974    & 0.4742    & 23.119    & 0.1962    & 0.0961    & 0.9119    & 24.739\\
        &   FFDNet  & 0.0827    & 0.0973    & 32.935    & 0.7452    & 0.0454    & 0.9522    & 16.454\\
\multirow{2}{*}{Low}     
        &   BM3D    & 0.0348    & 0.0483    & 31.276    & 0.8870    & 0.0457    & 0.9518    & 16.342\\
        &   MC-DCT  & 0.0411    & \b{0.0383}& 34.523    & 0.9172    & 0.0476    & 0.9498    & 16.616\\
        & Wang \etal& 0.0648    & 0.0486    & 34.921    & \b{0.9855}& 0.0407    & 0.9573    & 16.000\\
        &   Ours    & \b{0.0124}& 0.0414    & \b{35.298}& 0.9847    & \b{0.0067}& \b{0.9933}& \b{4.000}\\
        \hline
        &   Noise   & 1.9621    & 1.7588    & 17.304    & 0.0645    & 0.0961    & 0.9119    & 24.739\\
        &   FFDNet  & 0.1656    & 0.3241    & 29.386    & 0.6564    & 0.0521    & 0.9358    & 19.417\\
\multirow{2}{*}{High}    
        &   BM3D    & 0.1279    & 0.1012    & 32.130    & 0.7733    & 0.0529    & 0.9350    & 20.105\\
        &   MC-DCT  & 0.0891    & 0.0727    & 31.283    & 0.8224    & 0.0681    & 0.9292    & 19.647\\
        & Wang \etal& 0.0577    & 0.0495    & 33.816    & 0.9637    & 0.0439    & 0.9539    & 21.000\\
        &   Ours    & \b{0.0150}& \b{0.0410}& \b{35.296}& \b{0.9841}& \b{0.0114}& \b{0.9887}& \b{4.000}\\
        \hline
\hline
\end{tabular}
\caption{Quantitative results for flow images in the converging channel geometry, comparing FFDNet \cite{zhang2018ffdnet}, BM3D \cite{lebrun2012analysis}, MC-DCT \cite{yu2011dct}, Wang \etal \cite{wang2022dense}, and the proposed method.}
\label{tb:result_convCH}
\end{table}

\begin{figure}
    \centering
    \includegraphics[width=\linewidth]{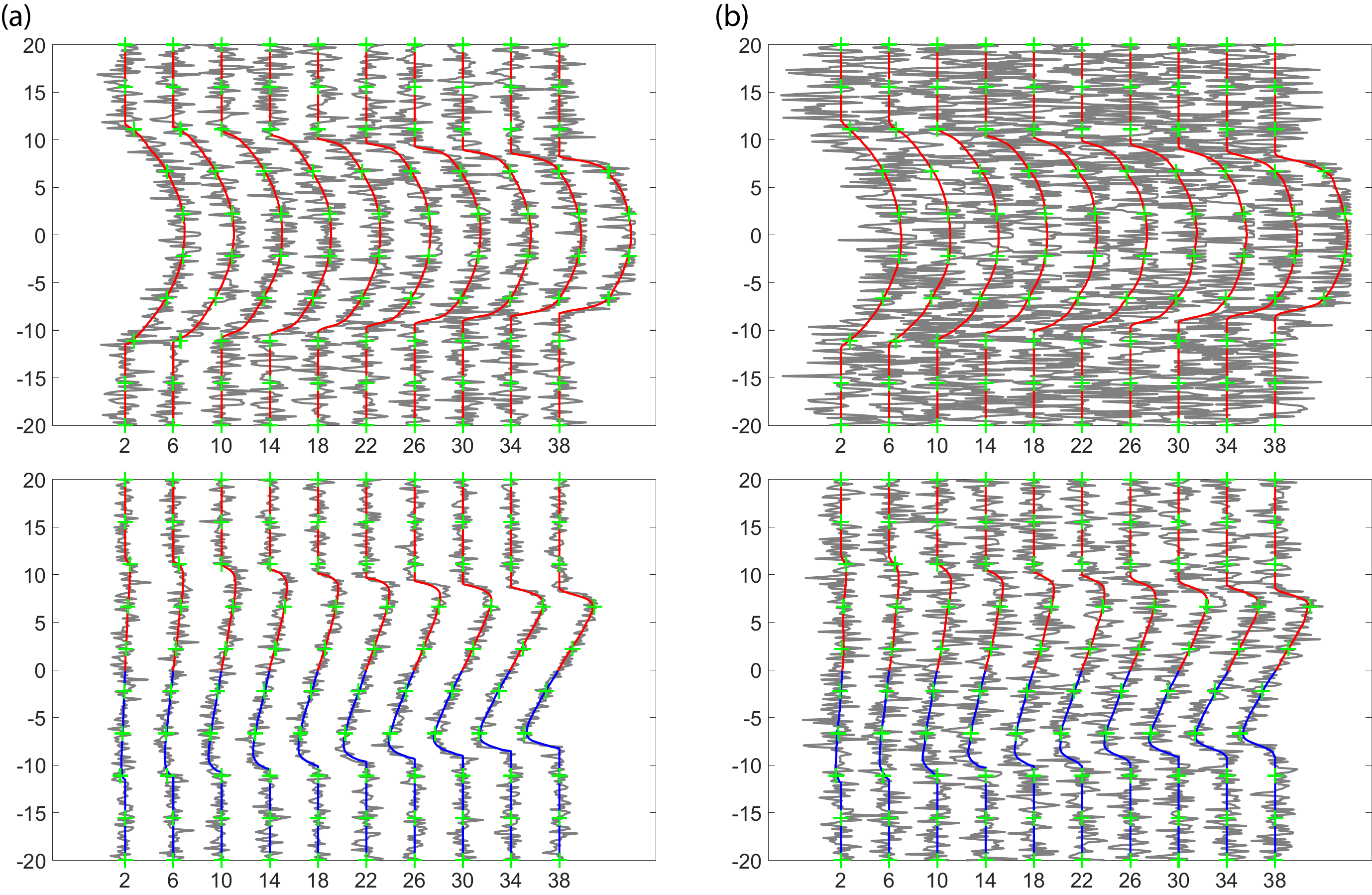}
    \caption{Discrepancies in velocity fields among the noisy input (gray), reconstructed results (red for positive, blue for negative), and ground truth (green “+”). The upper indicates the velocity in $x$-axis while the lower for $y$-axis. (a) Low noise; (b) High noise.}
    \label{fig:exp_convCH_profile}
\end{figure}

To validate the proposed method, we conducted experiments on synthetic flow images within a converging channel. The data consists of a two-channel image of size $256 \times 256$, representing the vertical and horizontal components of the velocity field over a physical domain measuring $40\ \mathrm{cm} \times 40\ \mathrm{cm}$. In the simulation setup, the inlet boundary velocity follows a parabolic profile with a maximum velocity of $5\ \mathrm{cm/s}$. We assume a Newtonian fluid with dynamic viscosity $\mu = 0.035\ \mathrm{g \cdot cm^{-1} \cdot s^{-1}}$ and density $\rho = 1.05\ \mathrm{g \cdot cm^{-3}}$, corresponding to a Reynolds number of $150$ based on a characteristic length of $5\ \mathrm{cm}$ and a characteristic velocity of $1\ \mathrm{cm/s}$. Either low or high levels of Gaussian noise were added to the synthesized images. For low noise, the $x$-direction flow had zero-mean noise with standard deviation 1, and the $y$-direction flow had zero-mean noise with standard deviation 0.01. For high noise, the $x$-direction flow had a standard deviation of 2, and the $y$-direction flow had a standard deviation of 0.02. The training was conducted for 240 iterations, alternating between 40 iterations of the fluid subproblem and 40 iterations of the geometry subproblem. Each fluid subproblem involves 50,000 collocation points. The entire training process takes approximately 5 minutes for the converging channel geometry case.

We compared our method with learning-based approaches like FFDNet \cite{zhang2018ffdnet} and traditional image processing techniques such as BM3D \cite{lebrun2012analysis}, multiscale DCT \cite{yu2011dct}, and a physics-based method \cite{wang2022dense}. The evaluation included both quantitative metrics and qualitative visualizations for flow image reconstruction and region segmentation. Since competing methods do not provide segmentations directly, we applied the geodesic active contour model \cite{caselles1997geodesic} to extract flow regions from their denoised outputs.

Figure~\ref{fig:exp_convCH} shows qualitative comparisons. The red contours indicate the ground-truth flow regions, and the yellow contours show those inferred by each method. The first two rows correspond to low-noise inputs, and the last two correspond to high-noise conditions. For each noise level, the velocity components in the $x$- and $y$-directions are shown in the upper and lower rows, respectively. The yellow contour in the "Noisy" column reflects the initial domain estimate used to initialize the applicable methods.

As shown in Figure~\ref{fig:exp_convCH} and Table~\ref{tb:result_convCH}, our method consistently outperforms the others. Regarding flow image quality, conventional image denoising models are unable to fully eliminate artifacts, often resulting in unnatural or physically implausible flow patterns. While the physics-based approach by Wang \etal~\cite{wang2022dense} achieves improved performance, its accuracy remains limited in both mechanical reconstruction and domain segmentation, which arises from the lack of coupled geometry evolution and the absence of domain boundary information during the optimization process. In contrast, by incorporating the physics of fluid flow through the Navier–Stokes equations and the associated domain evolution, our approach produces more realistic and accurate reconstructions in both fluid mechanics and domain. The PINN model with boundary information used for velocity estimation further ensures robust and efficient optimization. Regarding region segmentation, it is observed that the active contour model, when applied to denoised images, often yields inaccurate boundaries, particularly near wall regions and close to the inlet, due to its reliance solely on image gradients. In contrast, our method benefits from the underlying velocity field information through a registration-based manner, enabling it to recover flow regions that closely match the ground truth and extract a more accurate flow region.

\begin{figure}
    \centering
    \includegraphics[width=0.7\linewidth]{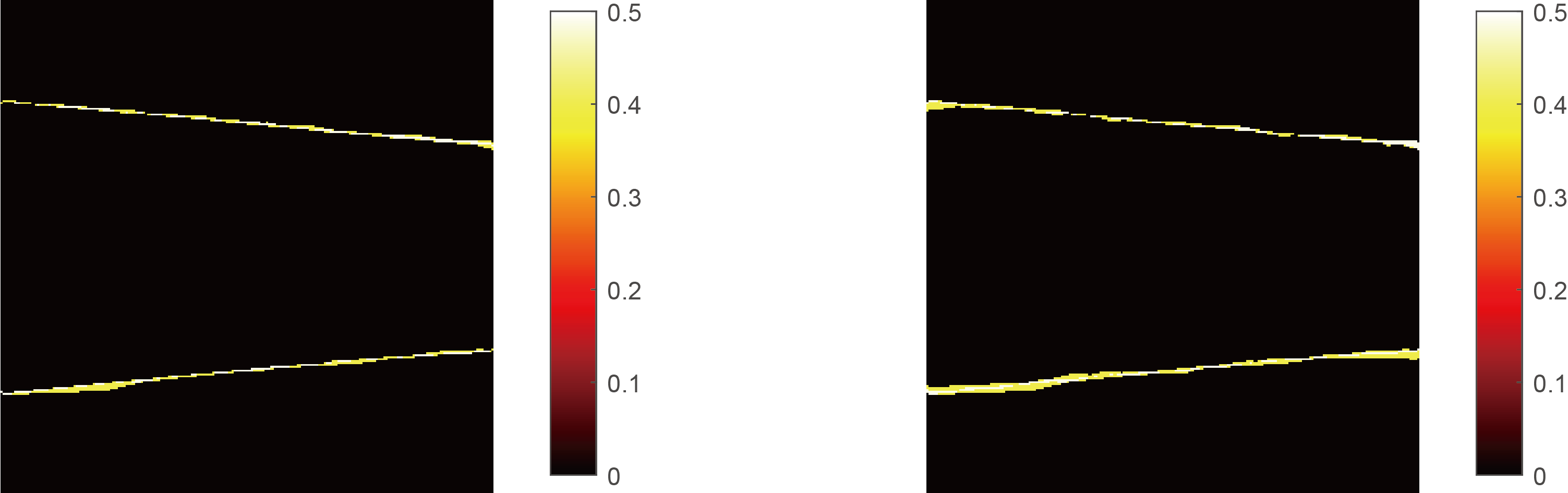}
    \caption{Illustration of segmentation uncertainty for the converging channel geometry with Gaussian noise. The maps show the standard deviation computed from five noisy images with randomly generated noise. The left corresponds to the low-noise case, while the right represents the high-noise case.}
    \label{fig:convCH_UQ}
\end{figure}

To closely examine the noisy measurements, the reconstructed velocity field, and the ground truth, we present Figure 6. The upper row shows the $x$-velocity component, and the bottom row shows the $y$-velocity component. In each plot, the curves represent velocity magnitudes sampled along a horizontal line at a fixed $x$-location. The horizontal displacement from the vertical indicates speed — larger deviations correspond to higher velocities. Reconstructed velocities are colored red for positive (rightward/upward) and blue for negative (leftward/downward) values; gray curves represent noisy observations, and green “+” markers indicate ground truth. From the profile observations, the reconstructed velocities closely match the ground truth across inflow, outflow, and intermediate regions. They accurately capture the fluid dynamics and effectively suppress noise, as indicated by the gray curves, while preserving physical consistency.

We also evaluate the uncertainty of our model under different levels of Gaussian noise. Specifically, we apply our method to five independently generated noisy images at both low and high noise levels, and compute the standard deviation of the resulting segmented regions. The corresponding uncertainty map, shown in Figure~\ref{fig:convCH_UQ}, indicates that the segmentation uncertainty remains relatively low even under high noise. This robustness can be attributed to the global guidance provided by the registration-based geometry modeling.

\subsection{Flow Image over Aorta Vessel}

\begin{figure}
    \centering
    \includegraphics[width=\linewidth]{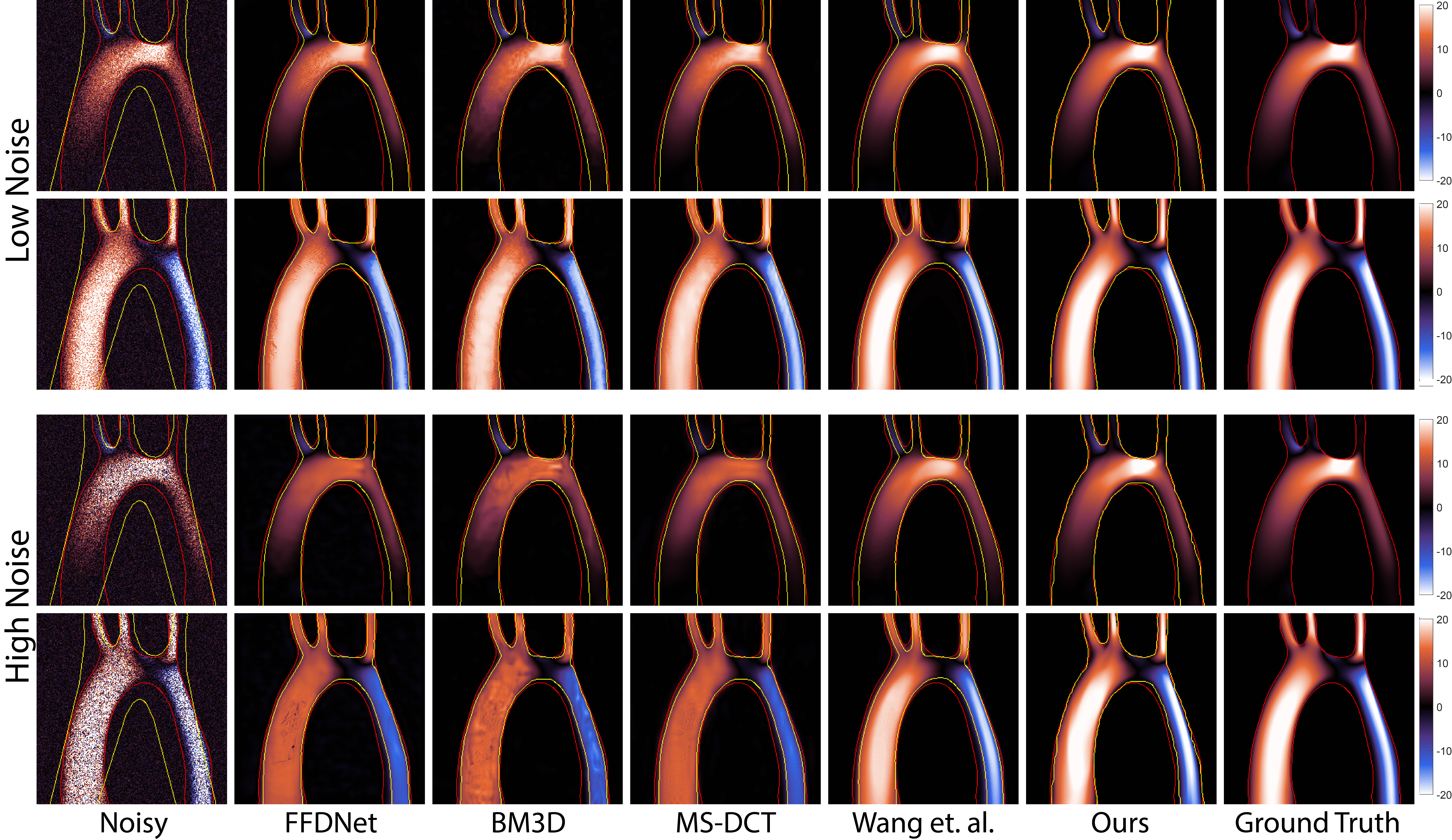}
    \caption{Qualitative comparison of flow and region reconstruction. Red contours show ground truth; yellow contours indicate predicted domains. Top two rows: low noise group; bottom two: high noise group. Each pair shows $x$- (top) and $y$-direction (bottom) velocity. Yellow contour in "Noisy" column marks the initialization.}
    \label{fig:exp_aorta}
\end{figure}

\begin{table}[!htp]
\centering
\begin{tabular}{c|c|cccc|ccc}
\hline
        &           &  \multicolumn{4}{c}{Flow Image Reconstruction}&  \multicolumn{3}{c}{Fluid Domain Segmentation}    \\\cline{3-9}
        &           &  RE       &  MSE      &  PSNR     &  SSIM     &  MSE      &  Dice     &  HD95  \\\hline
        &   Noise   & 2.1716    & 5.8275    & 24.375    & 0.2967    & 0.1627    & 0.7904    & 26.476\\
        &   FFDNet  & 0.1456    & 0.8605    & 32.514    & 0.9251    & 0.0517    & 0.9194    & 15.454\\
\multirow{2}{*}{Low}     
        &   BM3D    & 0.1371    & 0.7917    & 32.276    & 0.9176    & 0.0529    & 0.9119    & 14.342\\
        &   MC-DCT  & 0.1670    & 0.7214    & 34.675    & 0.9353    & 0.0613    & 0.8922    & 14.000\\
        & Wang \etal& 0.1651    & 0.4825    & 35.642    & 0.9788    & 0.0587    & 0.9024    & 13.928\\
        &   Ours    & \b{0.0847}& \b{0.4133}& \b{38.447}& \b{0.9869}& \b{0.0281}& \b{0.9574}& \b{8.000}\\
        \hline
        &   Noise   & 3.1875    & 24.0538   & 18.680    & 0.2487    & 0.1627    & 0.7904    & 26.476\\
        &   FFDNet  & 0.1566    & 3.4682    & 26.672    & 0.8915    & 0.0637    & 0.9019    & 16.170\\
\multirow{2}{*}{High}     
        &   BM3D    & 0.2654    & 3.7285    & 24.287    & 0.8835    & 0.0607    & 0.8881    & 16.854\\
        &   MC-DCT  & 0.2753    & 4.1753    & 27.176    & 0.9117    & 0.0648    & 0.8767    & 15.000\\
        & Wang \etal& 0.2328    & 0.8734    & 33.374    & 0.9667    & 0.0761    & 0.8937    & 15.000\\
        &   Ours    & \b{0.0964}& \b{0.5973}& \b{36.136}& \b{0.9804}& \b{0.0296}& \b{0.9540}& \b{8.000}\\
        \hline
\hline
\end{tabular}
\caption{Quantitative results for flow images in the aorta geometry, comparing FFDNet \cite{zhang2018ffdnet}, BM3D \cite{lebrun2012analysis}, MC-DCT \cite{yu2011dct}, Wang \etal \cite{wang2022dense}, and the proposed method.}
\label{tb:result_Aorta}
\end{table}

\begin{figure}
    \centering
    \includegraphics[width=\linewidth]{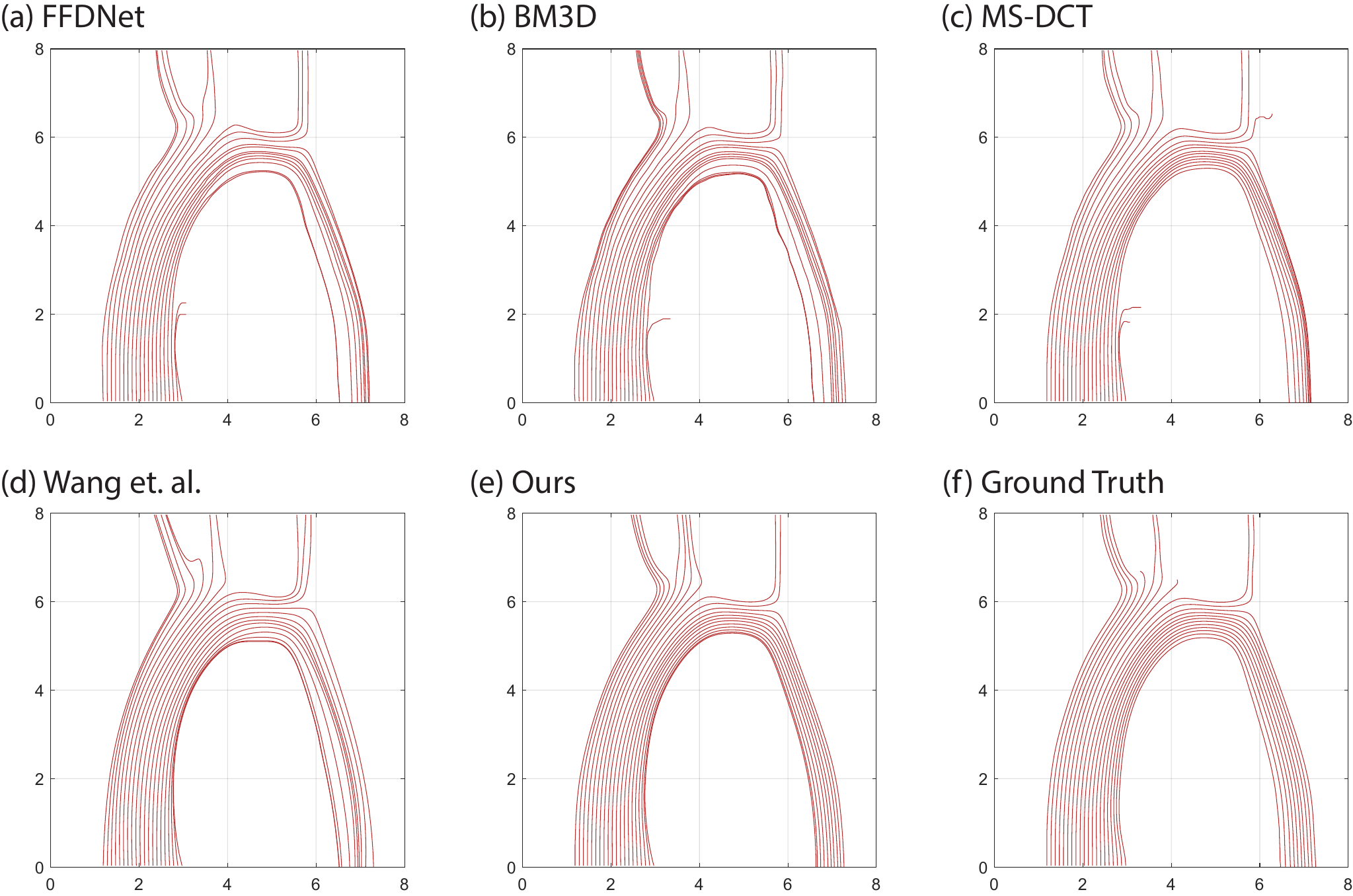}
    \caption{Streamline visualization reconstructed from a low-noise flow image under identical settings.}
    \label{fig:stream_low}
\end{figure}
\begin{figure}
    \centering
    \includegraphics[width=\linewidth]{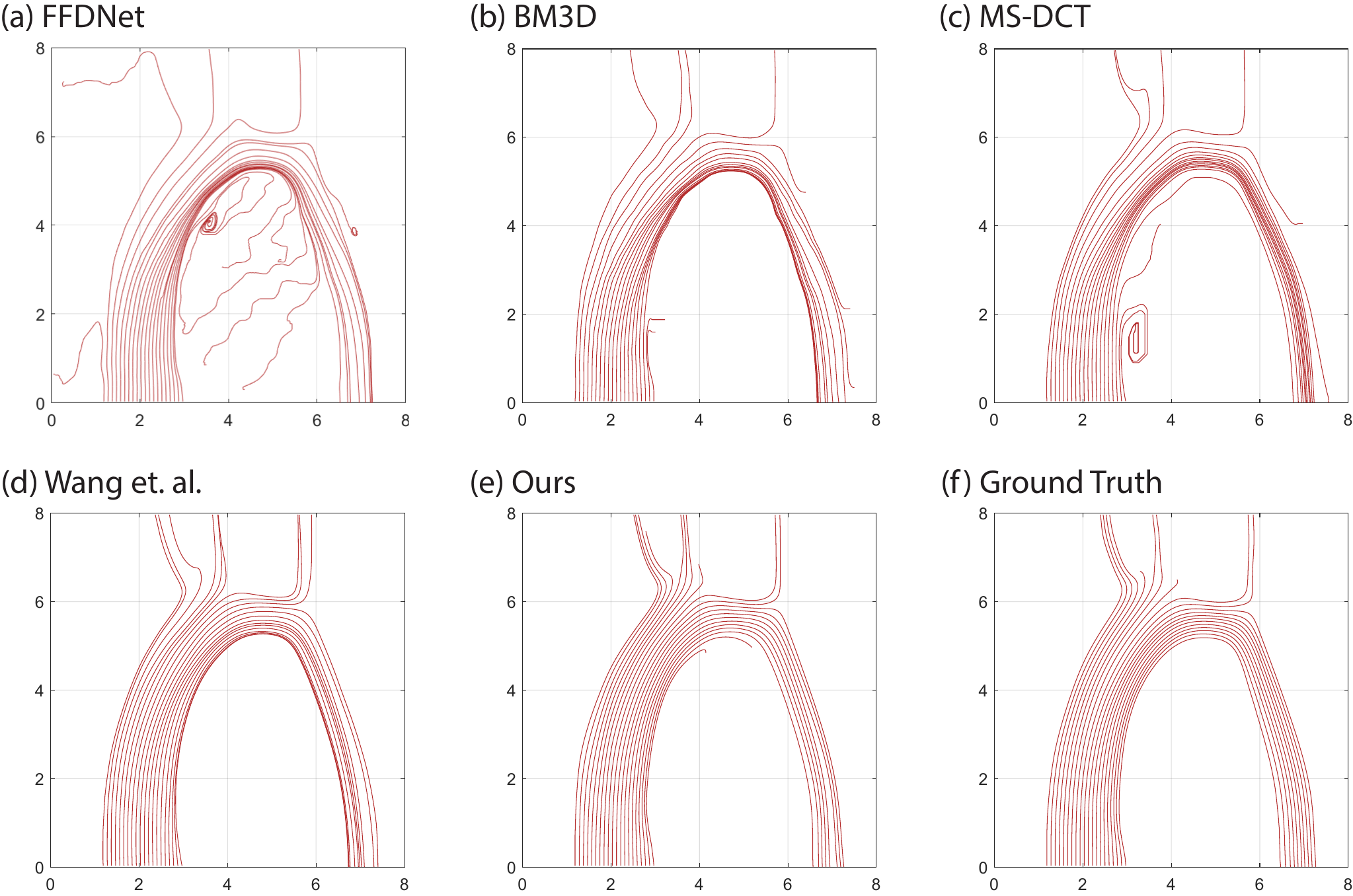}
    \caption{Streamline visualization reconstructed from a high-noise flow image under identical settings.}
    \label{fig:stream_high}
\end{figure}

To evaluate our method in a more realistic setting, we conducted experiments on synthetic flow images within an aorta geometry. The $256 \times 256$ images represent an $8\ \mathrm{cm} \times 8\ \mathrm{cm}$ domain, with the radius of the aorta around $2\ \mathrm{cm}$. Following the simulation setup in \cite{updegrove2017simvascular,simvascular}, a parabolic inlet velocity profile with a peak of $20\ \mathrm{cm/s}$ was used. The dynamic viscosity is set to $\mu = 0.035\ \mathrm{g \cdot cm^{-1} \cdot s^{-1}}$ and the density to $\rho = 1.05\ \mathrm{g \cdot cm^{-3}}$. The Reynolds number for the simulation is $600$, based on a characteristic length of $1\ \mathrm{cm}$ and a characteristic velocity of $20\ \mathrm{cm/s}$. Signal-dependent noise was added following the model in \eqref{eq:noise_model}. For low noise, the standard deviations were set to $\sigma_1 = 1$ and $\sigma_2 = 2$ for the two directions, while for high noise, both directions used $\sigma_1 = 2$ and $\sigma_2 = 2$. The training was performed for 400 iterations, alternating between 40 iterations of the fluid subproblem and 40 iterations of the geometry subproblem. Each fluid subproblem uses 50,000 collocation points. The entire training process takes approximately 9 minutes for the aorta geometry case.

As in the previous case, we compared our method with FFDNet \cite{zhang2018ffdnet}, BM3D \cite{lebrun2012analysis}, multiscale DCT \cite{yu2011dct}, and a physics-based method \cite{wang2022dense}, evaluating both flow reconstruction and segmentation. For methods without native segmentation, the geodesic active contour model \cite{caselles1997geodesic} was applied. Figure~\ref{fig:exp_aorta} presents qualitative results, following the same layout as Figure~\ref{fig:exp_convCH}, with red contours indicating ground truth and yellow contours showing inferred domains under both low and high noise conditions.

As shown in the results in Figure~\ref{fig:exp_aorta} and Table~\ref{tb:result_Aorta}, our method consistently outperforms the others in all evaluation metrics. Conventional denoising methods such as FFDNet, BM3D, and multiscale DCT struggle to effectively suppress noise, particularly under high-noise conditions. This limitation is especially evident in the last two columns of the figure, where these methods produce flow fields with pronounced artifacts or physically implausible patterns. In such cases, the reconstructed images fail to preserve the underlying flow structure and offer limited utility for meaningful fluid dynamics analysis. While the physics-based approach by Wang \etal~\cite{wang2022dense} achieves improved quantitative performance compared to traditional denoising methods, its accuracy remains insufficient, primarily due to the absence of coupled geometry evolution and of the associated domain boundary information during the optimization process.

In contrast, our method consistently produces clean and physically realistic reconstructions, as demonstrated in Figures \ref{fig:stream_low} and \ref{fig:stream_high}, which show reconstructed streamlines from low- and high-noise images, respectively. This improvement is primarily attributed to the incorporation of physical constraints via the Navier–Stokes equations and the robust optimization enabled by the differentiable PINN model, as well as the coupled domain evolution during the process. By enforcing consistency with the governing physics of fluid flow and the domain, our differentiable approach effectively suppresses noise while preserving essential flow features embedded in noisy measurements.

Regarding flow region segmentation, intensity-based methods, such as geodesic active contours, often result in inaccurate boundary detection, particularly near vessel walls, where velocity naturally decreases. This misidentification leads to segmentations that deviate from the true physiological flow region.

Our method, on the contrary, performs flow reconstruction and region segmentation in a coupled manner, allowing each task to inform and refine the other. This mutual reinforcement improves the accuracy of both the reconstructed flow fields and the segmented domains. The synergy between physics-informed reconstruction and segmentation contributes to the overall performance advantage of our approach, as confirmed by both visual results and quantitative evaluations. For the high‐noise aorta case, the segmented domain occasionally shows jagged boundaries. This was addressed in \cite{kontogiannis2022joint} by adding a viscous term that implies motion of the domain's boundary by its mean curvature \cite{merriman1992diffusion}. Instead, we apply morphological opening followed by closing with a disk of radius $r=6$, which effectively bounds the absolute value of the boundary curvature by $1/r$.

\begin{figure}
    \centering
    \includegraphics[width=0.7\linewidth]{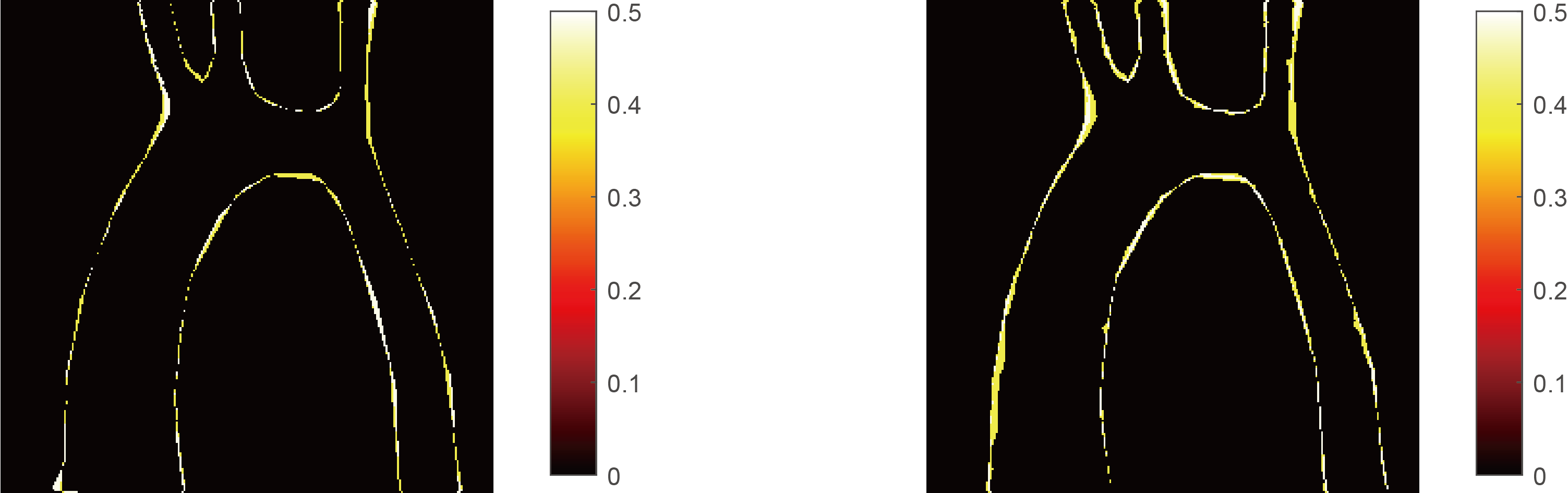}
    \caption{Illustration of segmentation uncertainty for the aorta geometry with signal-dependent noise. The maps show the standard deviation computed from five noisy images with randomly generated noise. The left corresponds to the low-noise case, while the right represents the high-noise case.}
    \label{fig:Aorta_UQ}
\end{figure}

To assess the segmentation uncertainty in the aorta geometry case with signal-dependent noise, we follow a procedure similar to the converging channel experiment. Our method is applied to five independently generated noisy images at both low and high noise levels, and the standard deviation of the resulting segmented regions is calculated. The uncertainty map in Figure~\ref{fig:Aorta_UQ} shows that the segmentation remains stable, with controllable variations even under high noise conditions. This consistency highlights the robustness of the proposed framework, benefiting from the globally guided deformation in the registration-based geometry modeling.

\begin{figure}
    \centering
    \includegraphics[width=\linewidth]{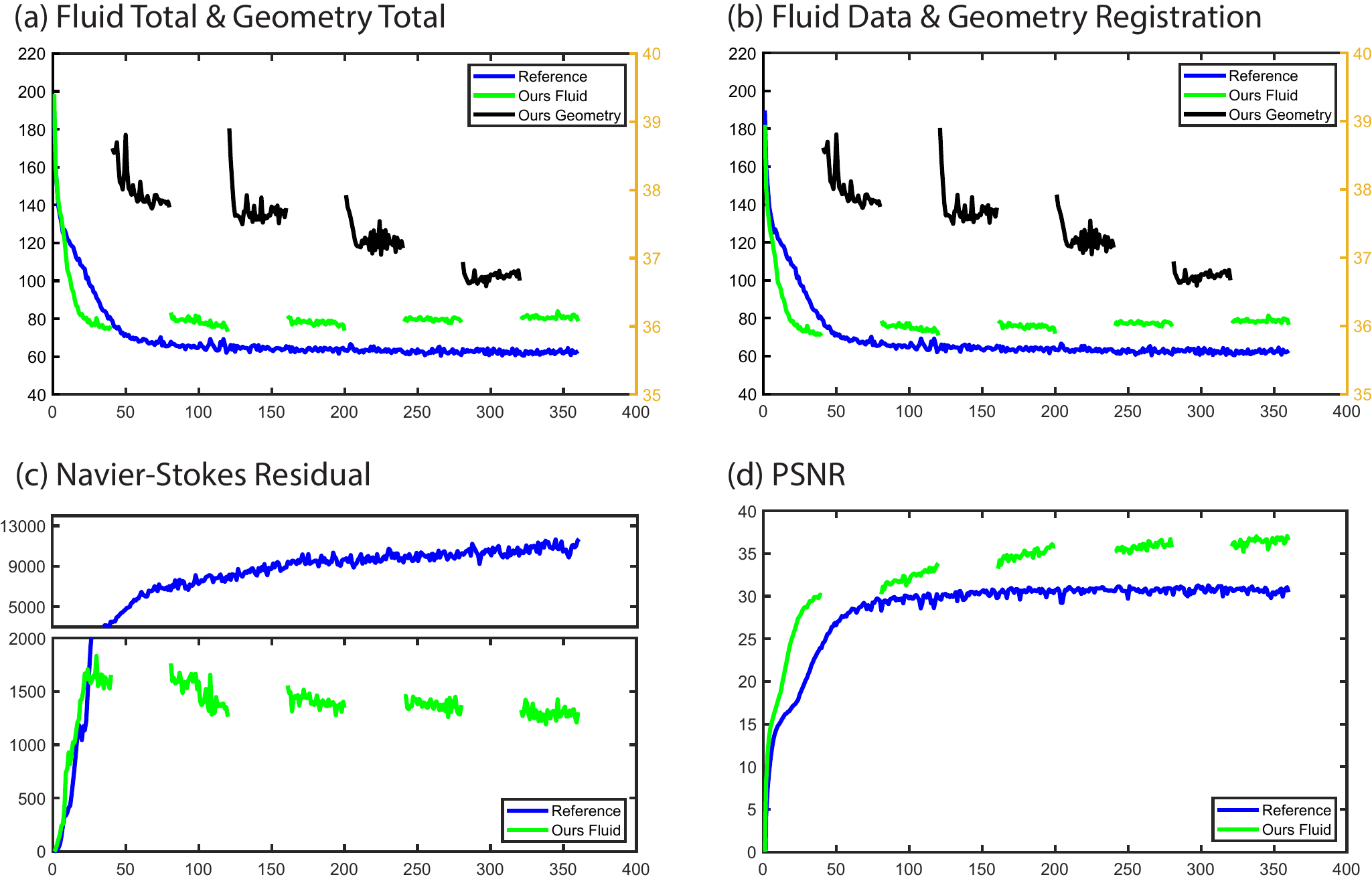}
    \caption{Training dynamics and convergence behavior for the aorta geometry case by our method and the reference. The reference method is done by removing the Navier–Stokes residual terms and boundary conditions, training the model solely with the data-fidelity term. The green curve represents the total loss of the fluid subproblem, the black curve corresponds to the geometry subproblem, and the blue curve denotes the result obtained by the reference model trained with only the data-fidelity term. (a) Total fluid and geometry losses; (b) data loss in the fluid subproblem and registration loss in the geometry subproblem; (c) Navier–Stokes residuals; (d) PSNR evolution during training.}
    \label{fig:aorta_loss}
\end{figure}

We further examine the error plots in Figure~\ref{fig:aorta_loss} to better understand the training dynamics and the role of the physics-informed component, by comparing our method with a reference approach. The reference method is obtained by removing the Navier–Stokes residual terms and boundary conditions, and training the model solely using the data-fidelity term. In these plots, for our method, the green curve represents the total loss of the fluid subproblem, while the black curve corresponds to the geometry subproblem. The blue curve shows the result obtained using the reference approach. As illustrated in Figure~\ref{fig:aorta_loss}(a), the alternating behavior between the total fluid loss (y-axis on the left) and the total geometry loss (y-axis on the right) clearly demonstrates the effect of our alternating optimization strategy. Because our method incorporates physical constraints, the data error decreases more slowly than in the reference model. However, as shown in Figure~\ref{fig:aorta_loss}(c), the minimization of the Navier–Stokes residual leads to significantly improved reconstruction accuracy, which is reflected by the better PSNR in Figure~\ref{fig:aorta_loss}(d).

\begin{figure}
    \centering
    \includegraphics[width=0.4\linewidth]{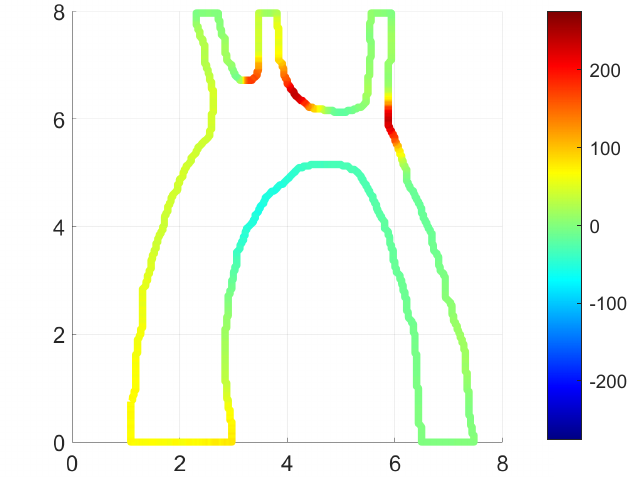}
    \caption{Wall shear stress (WSS) distribution on the aortic wall obtained from the reconstructed solution in the high-noise case. The color codes the spatial variation of shear stress magnitude ($\mathrm{dyn / cm^{2}}$).}
    \label{fig:WSS}
\end{figure}

Wall shear stress (WSS) is a clinically significant hemodynamic quantity. Therefore, we further estimate this variable and visualize its spatial distribution in Figure \ref{fig:WSS}. The results are obtained from the high-noise aorta geometry case. Using the reconstructed velocity networks, which allow direct computation of velocity gradients, together with the pressure network, we evaluate the WSS distribution on the vessel boundary. A color-coded plot with an accompanying color bar is provided to illustrate its spatial variation. Combined with the velocity visualization shown in Figure \ref{fig:exp_aorta}, we observe that elevated shear stress occurs in regions where the flow direction changes markedly due to geometric constraints of the vessel, while lower WSS values appear in areas where the velocity field propagates more smoothly.

\subsection{Self Ablation}

Here, we perform ablation studies to assess the impact of different weighting parameters in the optimization of the fluid and geometry subproblems. The high-niose flow image with Aorta geometry is used.

\textbf{Test on super-resolution}
Our method is built upon a meshless PINN solver constrained by the Navier–Stokes equations. Thanks to the meshless formulation, the reconstructed velocity field can be evaluated at arbitrary spatial resolution while maintaining solution accuracy. To demonstrate this advantage, we include a super-resolution experiment using aorta flow data of sizes $64 \times 64$, $128 \times 128$, and the originally $256 \times 256$, For each case, we apply the same reconstruction algorithm, and then evaluate the resulting velocity field on a $256 \times 256$ grid. As shown in Table~\ref{tb:self}, all reconstructed high-resolution velocity fields closely match the $256 \times 256$ ground truth, as reflected by the quantitative metrics. This experiment confirms that our method can reliably recover high-quality flow fields even from low-resolution measurements.

\textbf{Test on $\alpha_\text{data}$ : }
We conducted an ablation study to evaluate the effect of the data fitting weight parameter $\alpha_\text{data}$, which plays a critical role in aligning the predicted velocity field with the observed data. The best overall performance was achieved at $\alpha_\text{data} = 10^0$, striking a strong yet balanced emphasis on data fidelity while maintaining physical regularization. At $\alpha_\text{data} = 10^1$, the model still performed well, although signs of overfitting appeared as the network prioritized fitting noisy data, which reduced generalization. Lowering the weight to $10^{-1}$ caused a slight performance drop due to weaker data alignment, particularly in regions with fine flow features. When $\alpha_\text{data}$ was reduced further to $10^{-2}$, the performance suffered significantly: the data constraint was too weak to effectively guide the model, causing the optimization to rely primarily on regularization and resulting in poor recovery of the flow field and domain structure.

\begin{figure}[t]
    \centering
    \includegraphics[height=5cm]{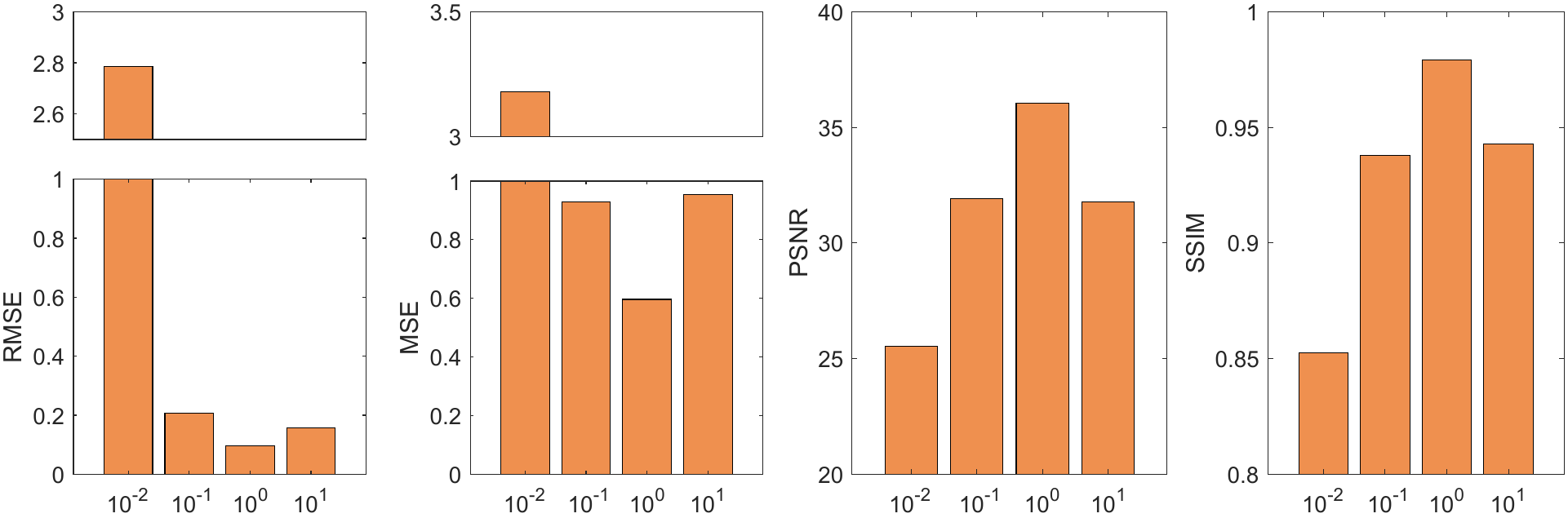}
    \caption{Ablation study evaluating the impact of the data fitting weight $\alpha_\text{data}$ on training performance.}
    \label{fig:ablation_data}
\end{figure}

\textbf{Test on $\alpha_\text{bc}$ : }
We conducted a self-ablation study on the sensitivity to the Beltrami coefficient regularization weight, $\alpha_\text{bc}$. The best performance occurred at $\alpha_\text{bc} = 10^1$, balancing mapping regularization and registration. At $\alpha_\text{bc} = 10^0$, results were acceptable, although minor deterioration appeared, probably due to increased deformation causing localized shrinkage. Lowering to $\alpha_\text{bc} = 10^{-1}$ significantly degraded performance, as weak regularization allowed excessive domain deviation, topological errors, and outlier regions that harmed precision. Conversely, at $\alpha_\text{bc} = 10^2$, over-regularization constrained the model, limiting domain propagation and adaptation to subtle variations, leading to incomplete or inaccurate reconstructions. This over-constraint effect intensified at $\alpha_\text{bc} = 10^3$.
\newcolumntype{D}{>{\centering\arraybackslash}p{0.12\columnwidth}}
\newcommand{\owc}[1]%
    {%
        \global\let\oldcol\currowcolor%
        \global\def\currowcolor{\oldcol!50!#1}%
    }%
\begin{table}[!htp]
\centering
\begin{tabu}{c|DDDD}
\hline\hline
Resol.              &  RE       &  MSE      &  PSNR     &  SSIM \\\hline
$64^2$              & 0.1381    &  0.6891   & 33.648    & 0.9498\\
$128^2$             & 0.1207    &  0.6343   & 35.187    & 0.9679\\
$256^2$             & 0.0969    &  0.5966   & 36.053    & 0.9792\\\hline
\hline
$\alpha_\text{data}$    &  RE       &  MSE      &  PSNR     &  SSIM \\\hline
$10^{-2}$               & 2.7861    &  3.1784   & 25.542    & 0.8524\\
$10^{-1}$               & 0.2071    &  0.9284   & 31.917    & 0.9378\\
$10^{ 0}$               & 0.0969    &  0.5966   & 36.053    & 0.9792\\  
$10^{ 1}$               & 0.1578    &  0.9546   & 31.786    & 0.9427\\\hline
\hline
$\alpha_\text{bc}$  & $\|\mu\|_2$ & Dice & HD & $||\mu||_\infty<1$  \\\hline
$10^{-1}$           &  1.078        & 0.8817    & 15.757    & \ding{55}\\
$10^{ 0}$           &  0.337        & 0.9375    & 10.757    & \ding{51}\\
$10^{ 1}$           &  0.068        & 0.9542    &  7.616    & \ding{51}\\
$10^{ 2}$           &  0.005        & 0.9056    & 13.572    & \ding{51}\\
$10^{ 3}$           &  0.000        & 0.8271    & 19.278    & \ding{51}\\\hline
\hline
\end{tabu}
\caption{Ablation study on the impact of flow image resolution, varying data fidelity weight $\alpha_\text{data}$ and the Beltrami regularization weight $\alpha_\text{bc}$ for flow images in an aorta-shaped domain under high-noise conditions.}
\label{tb:self}
\end{table}

\begin{figure}[t]
    \centering
    \includegraphics[height=5cm]{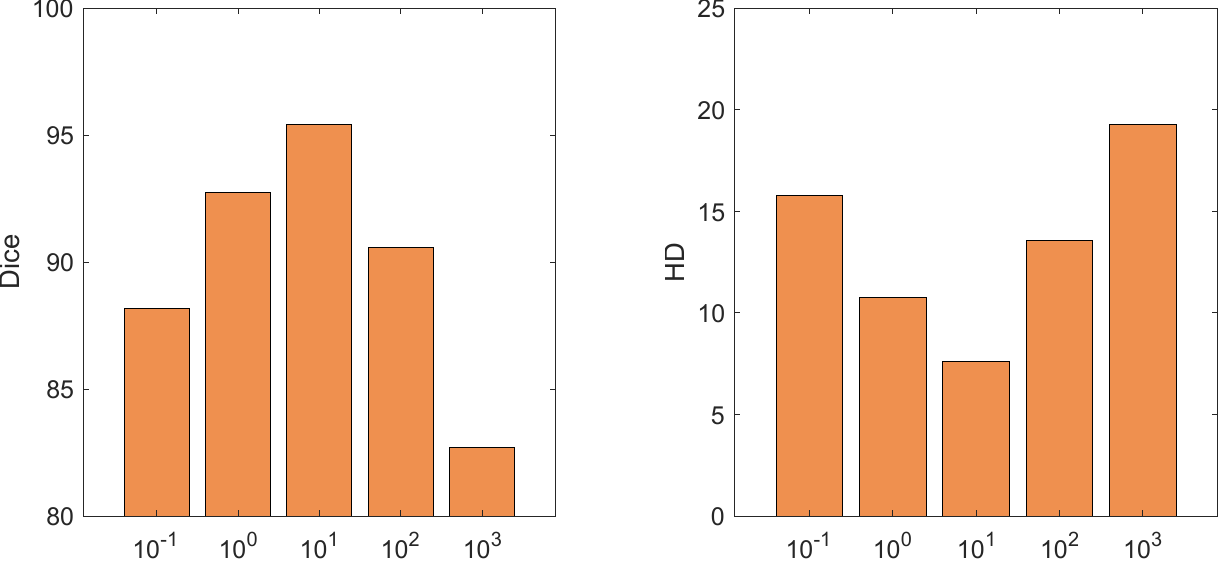}
    \caption{Ablation study evaluating the impact of Beltrami regularization weight $\alpha_\text{data}$ on training performance.}
    \label{fig:ablation_bc}
\end{figure}
\section{Conclusion}

In this work, we propose a novel framework that integrates deep learning with physical modeling to improve the reconstruction of 2D flow images under noisy measurements. The problem is formulated to enforce physical consistency through the Navier–Stokes equations and preserve anatomical structures using quasi-conformal mappings. It is decomposed into two subproblems: a fluid subproblem, which estimates the velocity field within a given domain using Navier–Stokes constraints, and a geometry subproblem, which refines the domain by aligning the reconstructed velocity with observed data via a quasi-conformal mapping. These subproblems are solved iteratively in a Gauss–Seidel fashion until convergence, providing a solution supported by theoretical guarantees. In practice, we employ a Physics-Informed Neural Network for the fluid subproblem and a UNet-based Mapping Estimator for the geometry subproblem.

We extensively validated our method through experiments. Initial tests on synthetic flow images in a converging channel under varying Gaussian noise levels validate the proposed method. Further evaluations on anatomically realistic aorta geometries with signal-dependent noise confirmed its effectiveness in complex real-world settings. Our method consistently outperformed the baseline approaches, delivering more accurate and higher-quality reconstructions. Ablation studies also highlighted the impact of weighting schemes and guided the selection of effective hyperparameters. 

Future work will extend this framework to 3D flow reconstruction and 4D dynamic flow analysis by replacing the 2D quasi-conformal mappings with Jacobian-based constraints or hyperelastic energy models to ensure smooth and bijective deformations in higher dimensions. We also plan to adapt the framework for higher Reynolds number flows by employing more expressive architectures, such as PirateNets \cite{wang2024piratenets}. Additionally, we aim to tackle more challenging imaging modalities, such as ultrasound Doppler imaging, which involve higher noise levels and real-time computational constraints. Furthermore, since the current Gauss–Seidel alternating scheme lacks a formal convergence guarantee, and the validation of the split problems relies primarily on empirical observations, we plan to develop a theoretical convergence analysis by adding additional regularization.

\bibliographystyle{siamplain}
\bibliography{references}

@string{cm = {Computational Mechanics}}

@string{cmame={Computer Methods in Applied Mechanics and Engineering}}

@string{iclr = {Proc. of IEEE Int. Conf. on Learning Representation}}

@string{tbe = {IEEE Trans. on Biomedical Engineering}}

@string{jsc = {Journal of Scientific Computing}}

@article{zhang2024full,
  title={Full 3D blood flow simulation in curved deformable vessels using physics-informed neural networks},
  author={Zhang, Han and Tai, Xue-Cheng},
  journal={Acta Mathematica Universitatis Comenianae},
  volume={93},
  number={4},
  pages={235--250},
  year={2024},
  publisher={Univerzita Komenskeho}
}

@article{updegrove2017simvascular,
  title={SimVascular: an open source pipeline for cardiovascular simulation},
  author={Updegrove, Adam and Wilson, Nathan M and Merkow, Jameson and Lan, Hongzhi and Marsden, Alison L and Shadden, Shawn C},
  journal={Annals of Biomedical Engineering},
  volume={45},
  pages={525--541},
  year={2017},
  publisher={Springer}
}

@misc{simvascular,
  author={Freidoonimehr, Navid and Arjomandi, Maziar and Zander, Anthony and Chin, Rey},
  title = {{SimVascular} Clinical Test Cases : Aortofemoral Normal - 2},
  note  = {https://simvascular.github.io/clinical/aortofemoral2.html},
  year={2025},
}

@article{zhang2021topology,
  title={Topology-preserving 3D image segmentation based on hyperelastic regularization},
  author={Zhang, Daoping and Lui, Lok Ming},
  journal={Journal of Scientific Computing},
  volume={87},
  number={3},
  pages={74},
  year={2021},
  publisher={Springer}
}

@article{crosetto2011parallel,
  title={Parallel Algorithms for Fluid-Structure Interaction Problems in Haemodynamics},
  author={Crosetto, Paolo and Deparis, Simone and Fourestey, Gilles and Quarteroni, Alfio},
  journal={SIAM Journal on Scientific Computing},
  volume={33},
  number={4},
  pages={1598--1622},
  year={2011},
  publisher={SIAM}
}

@article{quarteroni2004mathematical,
  title={Mathematical Modelling and Numerical Simulation of The Cardiovascular System},
  author={Quarteroni, Alfio and Formaggia, Luca},
  journal={Handbook of numerical analysis},
  volume={12},
  pages={3--127},
  year={2004},
  publisher={Elsevier}
}

@article{yan2022impact,
  title={Impact of Pressure Wire on Fractional Flow Reserve and Hemodynamics of The Coronary Arteries: A Computational and Clinical Study},
  author={Yan, Zhengzheng and Yao, Zhifeng and Guo, Weifeng and Shang, Dandan and Chen, Rongliang and Liu, Jia and Cai, Xiao-Chuan and Ge, Junbo},
  journal=tbe,
  volume={70},
  number={5},
  pages={1683--1691},
  year={2022},
  publisher={IEEE}
}

@article{bazilevs2008isogeometric,
  title={Isogeometric Fluid-Structure Interaction: Theory, Algorithms, and Computations},
  author={Bazilevs, Yuri and Calo, Victor M and Hughes, Thomas JR and Zhang, Yongjie},
  journal=cm,
  volume={43},
  pages={3--37},
  year={2008},
  publisher={Springer}
}

@article{figueroa2006coupled,
  title={A Coupled Momentum Method for Modeling Blood Flow in Three-Dimensional Deformable Arteries},
  author={Figueroa, C Alberto and Vignon-Clementel, Irene E and Jansen, Kenneth E and Hughes, Thomas JR and Taylor, Charles A},
  journal=cmame,
  volume={195},
  number={41-43},
  pages={5685--5706},
  year={2006},
  publisher={Elsevier}
}

@article{pijls1996measurement,
  title={Measurement of Fractional Flow Reserve To Assess The Functional Severity of Coronary-Artery Stenoses},
  author={Pijls, Nico HJ and de Bruyne, Bernard and Peels, Kathinka and van der Voort, Pepijn H and Bonnier, Hans JRM and Bartunek, Jozef and Koolen, Jacques J},
  journal={New England Journal of Medicine},
  volume={334},
  number={26},
  pages={1703--1708},
  year={1996},
  publisher={Mass Medical Soc}
}

@article{bukavc2013fluid,
  title={Fluid-Structure Interaction in Blood Flow Capturing Non-Zero Longitudinal Structure Displacement},
  author={Buka{\v{c}}, Martina and {\v{C}}ani{\'c}, Sun{\v{c}}ica and Glowinski, Roland and Tamba{\v{c}}a, Josip and Quaini, Annalisa},
  journal={Journal of Computational Physics},
  volume={235},
  pages={515--541},
  year={2013},
  publisher={Elsevier}
}

@article{wang2018higher,
  title={A Higher-Order Discontinuous Galerkin/Arbitrary Lagrangian Eulerian Partitioned Approach To Solving Fluid--Structure Interaction Problems With Incompressible, Viscous Fluids and Elastic Structures},
  author={Wang, Yifan and Quaini, Annalisa and {\v{C}}ani{\'c}, Sun{\v{c}}ica},
  journal=jsc,
  volume={76},
  pages={481--520},
  year={2018},
  publisher={Springer}
}

@article{KingBa15,
  author={Kingma, Diederik and Ba, Jimmy},
  journal=iclr,
  title={Adam: A Method for Stochastic Optimization},
  year={2015},
  address={San Diego, CA, USA},
  volume={12},
}

@article{amari1993backpropagation,
  title={Backpropagation and Stochastic Gradient Descent Method},
  author={Amari, Shun-ichi},
  journal={Neurocomputing},
  volume={5},
  number={4-5},
  pages={185--196},
  year={1993},
  publisher={Elsevier}
}

@article{peskin1972flow,
  title={Flow Patterns Around Heart Valves: A Numerical Method},
  journal={Journal of Computational Physics},
  volume={10},
  number={2},
  pages={252-271},
  year={1972},
  issn={0021-9991},
  doi={https://doi.org/10.1016/0021-9991(72)90065-4},
  author={Charles S Peskin}
}

@article{formaggia2001coupling,
  title={On the Coupling of 3D and 1D Navier--Stokes Equations for Flow Problems in Compliant Vessels},
  author={Formaggia, Luca and Gerbeau, Jean-Fr{\'e}d{\'e}ric and Nobile, Fabio and Quarteroni, Alfio},
  journal={cmame},
  volume={191},
  number={6-7},
  pages={561--582},
  year={2001},
  publisher={Elsevier}
}

@article{zhang2024meshless,
  title={A Meshless Solver for Blood Flow Simulations in Elastic Vessels Using a Physics-Informed Neural Network},
  author={Zhang, Han and Chan, Raymond H and Tai, Xue-Cheng},
  journal={SIAM Journal on Scientific Computing},
  volume={46},
  number={4},
  pages={C479--C507},
  year={2024},
  publisher={SIAM}
}

@article{aguayo2021distributed,
  title={A Distributed Resistance Inverse Method for Flow Obstacle Identification From Internal Velocity Measurements},
  author={Aguayo, Jorge and Bertoglio, Cristobal and Osses, Axel},
  journal={Inverse Problems},
  volume={37},
  number={2},
  pages={025010},
  year={2021},
  publisher={IOP Publishing}
}

@article{kontogiannis2022joint,
  title={Joint Reconstruction and Segmentation of Noisy Velocity Images as an Inverse Navier--Stokes Problem},
  author={Kontogiannis, Alexandros and Elgersma, Scott V and Sederman, Andrew J and Juniper, Matthew P},
  journal={Journal of Fluid Mechanics},
  volume={944},
  pages={A40},
  year={2022},
  publisher={Cambridge University Press}
}

@article{kang2015nonconvex,
  title={Nonconvex Higher-Order Regularization Based Rician Noise Removal With Spatially Adaptive Parameters},
  author={Kang, Myeongmin and Kang, Myungjoo and Jung, Miyoun},
  journal={Journal of Visual Communication and Image Representation},
  volume={32},
  pages={180--193},
  year={2015},
  publisher={Elsevier}
}

@article{martin20171,
  title={On 1-Laplacian Elliptic Equations Modeling Magnetic Resonance Image Rician Denoising},
  author={Mart{\'\i}n, Adri{\'a}n and Schiavi, Emanuele and Segura de Le{\'o}n, Sergio},
  journal={Journal of Mathematical Imaging and Vision},
  volume={57},
  number={2},
  pages={202--224},
  year={2017},
  publisher={Springer}
}

@article{li2016variational,
  title={Variational Multiplicative Noise Removal by DC Programming},
  author={Li, Zhi and Lou, Yifei and Zeng, Tieyong},
  journal={Journal of Scientific Computing},
  volume={68},
  number={3},
  pages={1200--1216},
  year={2016},
  publisher={Springer}
}

@incollection{TAO1986249,
  title={Algorithms for Solving a Class of Nonconvex Optimization Problems. Methods of Subgradients},
  booktitle={Fermat Days 85: Mathematics for Optimization},
  author={Tao, Pham Dinh Tao and Souad, El Bernoussi},
  publisher={North-Holland},
  volume={129},
  pages={249-271},
  year={1986},
  issn={0304-0208},
}

@article{lou2015weighted,
  title={A Weighted Difference of Anisotropic and Isotropic Total Variation Model for Image Processing},
  author={Lou, Yifei and Zeng, Tieyong and Osher, Stanley and Xin, Jack},
  journal={SIAM Journal on Imaging Sciences},
  volume={8},
  number={3},
  pages={1798--1823},
  year={2015},
  publisher={SIAM}
}

@article{xiao2011restoration,
  title={Restoration of Images Corrupted by Mixed Gaussian-Impulse Noise via L1--L0 Minimization},
  author={Xiao, Yu and Zeng, Tieyong and Yu, Jian and Ng, Michael K},
  journal={Pattern Recognition},
  volume={44},
  number={8},
  pages={1708--1720},
  year={2011},
  publisher={Elsevier}
}

@article{chen2015convex,
  title={A Convex Variational Model for Restoring Blurred Images with Large Rician Noise},
  author={Chen, Liyuan and Zeng, Tieyong},
  journal={Journal of Mathematical Imaging and Vision},
  volume={53},
  pages={92--111},
  year={2015},
  publisher={Springer}
}

@inproceedings{getreuer2011variational,
  title={A Variational Model for the Restoration of MR Images Corrupted by Blur and Rician Noise},
  author={Getreuer, Pascal and Tong, Melissa and Vese, Luminita A},
  booktitle={International Symposium on Visual Computing},
  pages={686--698},
  year={2011},
  organization={Springer}
}

@article{goldstein2009split,
  title={The Split Bregman Method for L1-Regularized Problems},
  author={Goldstein, Tom and Osher, Stanley},
  journal={SIAM Journal on Imaging Sciences},
  volume={2},
  number={2},
  pages={323--343},
  year={2009},
  publisher={SIAM}
}

@article{zhang2024learning,
  title={A Learning-Based Framework for Topology-Preserving Segmentation Using Quasiconformal Mappings},
  author={Zhang, Han and Lui, Lok Ming},
  journal={Neurocomputing},
  volume={600},
  pages={128124},
  year={2024},
  publisher={Elsevier}
}

@article{zhang2026quasi,
  title={Quasi-conformal Convolution: A Learnable Convolution for Deep Learning on Simply Connected Open Surfaces},
  author={Zhang, Han and Ip, Tsz Lok and Lui, Lok Ming},
  journal={SIAM Journal on Imaging Sciences},
  volume={19},
  number={1},
  pages={555--583},
  year={2026},
  publisher={SIAM}
}

@article{markl20124d,
  title={4D Flow MRI},
  author={Markl, Michael and Frydrychowicz, Alex and Kozerke, Sebastian and Hope, Mike and Wieben, Oliver},
  journal={Journal of Magnetic Resonance Imaging},
  volume={36},
  number={5},
  pages={1015--1036},
  year={2012},
  publisher={Wiley Online Library}
}

@article{caselles1997geodesic,
  title={Geodesic Active Contours},
  author={Caselles, Vicent and Kimmel, Ron and Sapiro, Guillermo},
  journal={International Journal of Computer Vision},
  volume={22},
  pages={61--79},
  year={1997},
  publisher={Springer}
}

@article{lebrun2012analysis,
  title={An Analysis and Implementation of the BM3D Image Denoising Method},
  author={Lebrun, Marc},
  journal={Image Processing on Line},
  volume={2},
  pages={175--213},
  year={2012}
}

@article{zhang2018ffdnet,
  title={FFDNet: Toward a Fast and Flexible Solution for CNN-Based Image Denoising},
  author={Zhang, Kai and Zuo, Wangmeng and Zhang, Lei},
  journal={IEEE Transactions on Image Processing},
  volume={27},
  number={9},
  pages={4608--4622},
  year={2018},
  publisher={IEEE}
}

@article{yu2011dct,
  title={DCT Image Denoising: A Simple and Effective Image Denoising Algorithm},
  author={Yu, Guoshen and Sapiro, Guillermo},
  journal={Image Processing on Line},
  volume={1},
  pages={292--296},
  year={2011}
}

@book{parker2010algorithms,
  title={Algorithms for Image Processing and Computer Vision},
  author={Parker, Jim R},
  year={2010},
  publisher={John Wiley \& Sons}
}

@book{sohr2012navier,
  title={The Navier-Stokes Equations: An elementary Functional Analytic Approach},
  author={Sohr, Hermann},
  year={2012},
  publisher={Springer Science \& Business Media}
}

@article{kelliher2006navier,
  title={Navier--Stokes equations with Navier boundary conditions for a bounded domain in the plane},
  author={Kelliher, James P},
  journal={SIAM journal on mathematical analysis},
  volume={38},
  number={1},
  pages={210--232},
  year={2006},
  publisher={SIAM}
}

@article{kontogiannis2024bayesian,
  title={Bayesian inverse Navier--Stokes problems: joint flow field reconstruction and parameter learning},
  author={Kontogiannis, Alexandros and Elgersma, Scott V and Sederman, Andrew J and Juniper, Matthew P},
  journal={Inverse Problems},
  volume={41},
  number={1},
  pages={015008},
  year={2024},
  publisher={IOP Publishing}
}

@article{aguayo2022analysis,
  title={Analysis of Obstacles Immersed in Viscous Fluids Using Brinkman's Law for Steady Stokes and Navier--Stokes Equations},
  author={Aguayo, Jorge and Lincopi, Hugo Carrillo},
  journal={SIAM Journal on Applied Mathematics},
  volume={82},
  number={4},
  pages={1369--1386},
  year={2022},
  publisher={SIAM}
}

@article{aguayo2022stability,
  title={A stability result for the identification of a permeability parameter on Navier-Stokes equations},
  author={Aguayo, Jorge and Osses, Axel},
  journal={Inverse Problems},
  volume={38},
  number={7},
  pages={075001},
  year={2022},
  publisher={IOP Publishing}
}

@article{aguayo2023distributed,
  title={Distributed parameter identification for the Navier--Stokes equations for obstacle detection},
  author={Aguayo, Jorge and Bertoglio, Crist{\'o}bal and Osses, Axel},
  journal={Inverse Problems},
  volume={40},
  number={1},
  pages={015012},
  year={2023},
  publisher={IOP Publishing}
}

@article{sun2020physics,
  title={Physics-constrained bayesian neural network for fluid flow reconstruction with sparse and noisy data},
  author={Sun, Luning and Wang, Jian-Xun},
  journal={Theoretical and Applied Mechanics Letters},
  volume={10},
  number={3},
  pages={161--169},
  year={2020},
  publisher={Elsevier}
}

@article{gao2021super,
  title={Super-resolution and denoising of fluid flow using physics-informed convolutional neural networks without high-resolution labels},
  author={Gao, Han and Sun, Luning and Wang, Jian-Xun},
  journal={Physics of Fluids},
  volume={33},
  number={7},
  year={2021},
  publisher={AIP Publishing}
}

@article{wang2022dense,
  title={Dense velocity reconstruction from particle image velocimetry/particle tracking velocimetry using a physics-informed neural network},
  author={Wang, Hongping and Liu, Yi and Wang, Shizhao},
  journal={Physics of fluids},
  volume={34},
  number={1},
  year={2022},
  publisher={AIP Publishing}
}

@incollection{corona2021joint,
  title={Joint phase reconstruction and magnitude segmentation from velocity-encoded MRI data},
  author={Corona, Veronica and Benning, Martin and Gladden, Lynn F and Reci, Andi and Sederman, Andrew J and Sch{\"o}nlieb, Carola-Bibiane},
  booktitle={Time-dependent Problems in Imaging and Parameter Identification},
  pages={1--24},
  year={2021},
  publisher={Springer}
}

@book{merriman1992diffusion,
  title={Diffusion generated motion by mean curvature},
  author={Merriman, Barry and Bence, James Kenyard and Osher, Stanley},
  year={1992},
  publisher={Department of Mathematics, University of California, Los Angeles}
}

@article{zhang2025deformation,
  title={Deformation-invariant neural network and its applications in distorted image restoration and analysis},
  author={Zhang, Han and Chen, Qiguang and Lui, Lok Ming},
  journal={Neural Networks},
  pages={107378},
  year={2025},
  publisher={Elsevier}
}

@article{wang2024piratenets,
  title={Piratenets: Physics-informed deep learning with residual adaptive networks},
  author={Wang, Sifan and Li, Bowen and Chen, Yuhan and Perdikaris, Paris},
  journal={Journal of Machine Learning Research},
  volume={25},
  number={402},
  pages={1--51},
  year={2024}
}

\newpage
\appendix
\section{Proof For Theorem 3.3}
\begin{proof}\,

\textbf{Conclusion (a): $c_n \to \boldsymbol{id}$ in $L^{\infty}(D)$.} 

    \noindent Since each $f_n$ is a diffeomorphism of $D$, the limit $f_*$ is also a diffeomorphism. Thus, $f_n \to f_*$ uniformly on $D$.    
    
    \noindent By Lemma~\ref{lemma:correction}, we have $c_n \to \boldsymbol{id}$ in $L^{\infty}(D)$. This implies that the objective functional $\| \tilde{u} - u_n \circ c_n \|_{L^2(D)}$ converges to $\|\tilde{u} - u\|^2_{L^2(D)}$ as the iterations proceed given that $D$ is compact. Such convergence reflects the diminishing need for geometric correction. 

\textbf{Conclusion (b) - $f_*$ is a partial and local minimizer}

    \noindent From assumption (1) and (2), and $u_n = \mathcal{U}\left[f_n\right]$ by definition, we conclude that
    \begin{equation*}
    \mathcal{U}\left[f_n\right] \rightarrow \mathcal{U}\left[f_*\right]=u_* \quad \text { in } L^{\infty}(D),
    \end{equation*}
    which implies
    \begin{equation*}
    J_{\mathcal{U}}\left(f_n\right)=\left\|\tilde{u}-\mathcal{U}\left[f_n\right]\right\|_{L^2(D)} \rightarrow\left\|\tilde{u}-\mathcal{U}\left[f_*\right]\right\|_{L^2(D)}=J_{\mathcal{U}}\left(f_*\right).
    \end{equation*}

\textbf{- Step 1:}

    \noindent Let $\varepsilon>0$ be arbitrary. By assumption (1), $u_n \rightarrow u_*$ in $L^{\infty}(D)$. So there exists $N_1$ such that for all $n \geq N_1$,
    \begin{equation*}
    \left\|u_n-u_*\right\|_{L^{\infty}(D)}<\frac{\varepsilon}{3}.
    \end{equation*}
    Now fix a small perturbation $c \in \mathcal{H}(D)$, close to identity, which we will quantify later. Define
    \begin{equation*}
    f=c^{-1} \circ f_n \quad\text{and}\quad \mathcal{U}[f]:=\mathcal{U}\left[c^{-1} \circ f_n\right].
    \end{equation*}
    With $c_n=\arg \min _{c \in \mathcal{H}(D)}\left\|\tilde{u}-u_n \circ c\right\|_{L^2(D)}$, we have
    \begin{equation}
    \left\|\tilde{u}-u_n \circ c_n\right\|_{L^2(D)} \leq\left\|\tilde{u}-u_n \circ c\right\|_{L^2(D)}, \quad \forall c \in \mathcal{H}(D).
    \label{eq:inequal_cn}
    \end{equation}

\textbf{- Step 2: Inequality for $\left\|\mathcal{U}\left[c^{-1} \circ f_n\right]-\mathcal{U}\left[f_n\right]\right\|_{L^2(D)}$}

    \noindent By the continuity of the mapping in assumption (3), for every $\varepsilon>0$, there exists $\delta>0$ such that for $f, f_n \in \mathcal{H}(D)$,
    \begin{equation*}
    \left\|f-f_n\right\|_{L^{\infty}(D)}<\delta \quad \Rightarrow \quad\left\|\mathcal{U}\left[f\right]-\mathcal{U}\left[f_n\right]\right\|_{L^2(D)}<\frac{\varepsilon}{3} .
    \end{equation*}
    By conclusion (a), we have $\|c^{-1} \circ f_n-f_n \|_{L^\infty(D)}< \delta$ for $n$ large enough. Hence, we get
    \begin{equation}
    \left\|\mathcal{U}\left[c^{-1} \circ f_n\right]-\mathcal{U}\left[f_n\right]\right\|_{L^2(D)} < \frac{\varepsilon}{3}.
    \label{eq:step2}
    \end{equation}

\textbf{- Step 3: Inequality for $\left\|u_n \circ c-u_n\right\|_{L^2(D)}$}

    \noindent As $u_n$ satisfies \eqref{eq:ns_reform} for fixed $f_n$, with assumption (4), there is a constant $M$ such that
    \begin{equation*}
    |u_n(c(x))-u_n(x)|^2\leq M^2|c(x)-x|^2.
    \end{equation*}
    
    \noindent Integrating over $D$, we get
    \begin{equation*}
    \left\|u_n \circ c-u_n\right\|_{L^2(D)}^2=\int_D\left|u_n(c(x))-u_n(x)\right|^2 d x \leq M^2 \int_D|c(x)-x|^2 d x
    \end{equation*}
    Now since $\|c-\mathrm{id}\|_{L^{\infty}(D)}<\delta$, if we choose
    \begin{equation*}
    \delta<\frac{\varepsilon}{3 M \sqrt{|D|}}
    \end{equation*}
    We can have
    \begin{equation}
    \left\|u_n \circ c-u_n\right\|_{L^2(D)} < M \delta \sqrt{|D|} < \frac{\varepsilon}{3}    
    \label{eq:step3}
    \end{equation}

\textbf{- Step 4:}

    \noindent With \eqref{eq:step2} and \eqref{eq:step3}, we have
    \begin{equation}
    \left\|u_n \circ c-\mathcal{U}\left[c^{-1} \circ f_n\right]\right\|_{L^2(D)} 
    \leq \left\|u_n \circ c-u_n\right\|_{L^2(D)}+\left\|u_n-\mathcal{U}\left[c^{-1} \circ f_n\right]\right\|_{L^2(D)} < \frac{2 \varepsilon}{3}.
    \label{eq:error_composite_solution}
    \end{equation}
    
    \noindent We now compare
    \begin{equation*}
    J_{\mathcal{U}}(f)=\left\|\tilde{u}-\mathcal{U}\left[c^{-1} \circ f_n\right]\right\|_{L^2(D)}, \quad J_{\mathcal{U}}\left(f_{n+1}\right)=\left\|\tilde{u}-\mathcal{U}\left[c_n^{-1} \circ f_n\right]\right\|_{L^2(D)}.
    \end{equation*}    
    By the triangle inequality
    \begin{equation*}
    \left\|\tilde{u}-\mathcal{U}\left[c^{-1} \circ f_n\right]\right\| \geq\left\|\tilde{u}-u_n \circ c\right\|-\left\|u_n \circ c-\mathcal{U}\left[c^{-1} \circ f_n\right]\right\| .
    \end{equation*}    
    Using \eqref{eq:error_composite_solution}
    \begin{equation*}
    \left\|\tilde{u}-\mathcal{U}\left[c^{-1} \circ f_n\right]\right\| > \left\|\tilde{u}-u_n \circ c\right\|-\frac{2 \varepsilon}{3}.
    \end{equation*}    
    Together with \eqref{eq:inequal_cn}, we have
    \begin{equation}
    J_{\mathcal{U}}(f) \geq\left\|\tilde{u}-u_n \circ c_n\right\|-\frac{2 \varepsilon}{3}.
    \end{equation}
    Similarly, we can have $\left\|u_n \circ c_n-\mathcal{U}\left[c_n^{-1} \circ f_n\right]\right\|<\varepsilon / 3$ by the techniques as above, and obtain
    \begin{equation}
    \left\|\tilde{u}-u_n \circ c_n\right\| \geq \left\|\tilde{u}- \mathcal{U}\left[f_{n+1}\right]\right\| - \left\|\mathcal{U}\left[f_{n+1}\right] - u_n \circ c_n\right\| > J\left(f_{n+1}\right)-\frac{\varepsilon}{3}.
    \end{equation}    
    Hence
    \begin{equation*}
    J_{\mathcal{U}}(f) > J_{\mathcal{U}}\left(f_{n+1}\right)-\varepsilon.    
    \end{equation*}    
    
    \noindent Take $n \rightarrow \infty$, then $f_{n+1} \rightarrow f_*, \mathcal{U}\left[f_{n+1}\right] \rightarrow \mathcal{U}\left[f_*\right]$, so
    \begin{equation}
    \liminf _{f \rightarrow f_*} J_{\mathcal{U}}(f) \geq J_{\mathcal{U}}\left(f_*\right).
    \end{equation}    
    Thus, $f_*$ is a partial and local minimizer of the misfit.

\textbf{Conclusion (c) - $u_*$  is a partial and local minimizer}

    \noindent Denote the solution set for $u$ as
    \begin{equation*}
    \mathcal{A}:=\left\{\mathcal{U}\left[f\right] \in L^2(D) \mid f \in \mathcal{H}(D)\right\} .
    \end{equation*} 
    Define the functional as
    \begin{equation*}
    J(u) = \|\tilde{u}-u\|_{L^2(D)}.
    \end{equation*}     
    As $f_* \in \mathcal{H}(D)$ is a local minimizer for $f \mapsto J(\mathcal{U}\left[f\right])$, there exists $\delta_1>0$ such that for all $f \in \mathcal{H}(D)$ and $\left\|f-f_*\right\|_{L^{\infty}(D)}<\delta_1$ such that
    \begin{equation}
    J(\mathcal{U}\left[f\right])=\|\tilde{u}-\mathcal{U}\left[f\right]\|_{L^2(D)} \geq\left\|\tilde{u}-\mathcal{U}\left[f_*\right]\right\|_{L^2(D)}=J\left(u_*\right).
    \label{eq:local_f}
    \end{equation} 
    
    \noindent Let $\varepsilon>0$, by assumption (3), there exists $\delta_2>0$ such that for all $f \in \mathcal{H}(D)$, which satisfy $\left\|f-f_*\right\|_{L^{\infty}(D)} < \delta_2$, we have
    $$
    \left\|\mathcal{U}\left[f\right]-\mathcal{U}\left[f_*\right]\right\|_{L^2(D)}<\varepsilon,
    $$
    where we can make $\delta_2<\delta_1$ without loss of generality.
    
    \noindent Then, for any $u\in \mathcal{A}$ and $\left\|\mathcal{U}\left[f\right]-\mathcal{U}\left[f_*\right]\right\|_{L^2(D)}<\varepsilon$, we have
    \begin{equation*}
     \left\|f-f_*\right\|_{L^{\infty}(D)}<\delta_2<\delta_1.
    \end{equation*}     
    By \eqref{eq:local_f}, we have 
    \begin{equation}
    J(u)=J(\mathcal{U}\left[f\right]) \geq J\left(\mathcal{U}\left[f_*\right]\right)=J\left(u_*\right),
    \end{equation} 
    which means $u_*$ is a partial and local minimizer of $J$ over $\mathcal{A}$.
\end{proof}

In the theory below, we show that if the sequence of mappings $f_n$, obtained from some iterative algorithm for minimizing $\|\tilde{u} - u\|^2_{L^2(D)}$ under \eqref{eq:ns_reform}, converges, then the corresponding correction mappings $c_n = f_{n+1}\circ f_{n}^{-1}$ will converge to the identity mapping.

\begin{lemma}
    Let $D \subset \mathbb{C}$ be a compact set. Given a sequence of diffeomorphisms $f_n : D \to D$. We have    
    \begin{equation}
        f_n \to f \quad \text{uniformly on } D, \text{ where $f: D \to D$ is a diffeomorphism,}
    \end{equation}
    if and only if
    \begin{equation}
        f_{n+1} \circ f_n^{-1} \to \boldsymbol{id}_D \quad \text{uniformly on } D.
    \end{equation}
    \label{lemma:correction}
\end{lemma}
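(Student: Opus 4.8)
The plan is to prove the two implications separately. The forward direction $(\Rightarrow)$ is the one actually used in the proof of Theorem~3.3, and it is short, so I would settle it first; the converse is the delicate one, and I expect it to require a small strengthening of the hypothesis.

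For $(\Rightarrow)$ I would start from $f_n \to f$ uniformly on $D$ and use only that each $f_n$ is a bijection of $D$. The key move is a change of variable: for $y \in D$, writing $x = f_n^{-1}(y) \in D$ gives $f_{n+1}\circ f_n^{-1}(y) - y = f_{n+1}(x) - f_n(x)$, and since $f_n^{-1}$ sends $D$ bijectively onto $D$ this yields the exact identity
\[
\bigl\|\, f_{n+1}\circ f_n^{-1} - \boldsymbol{id}_D \,\bigr\|_{L^\infty(D)} \;=\; \bigl\|\, f_{n+1} - f_n \,\bigr\|_{L^\infty(D)} .
\]
A triangle inequality through the common limit, $\|f_{n+1}-f_n\|_{L^\infty(D)} \le \|f_{n+1}-f\|_{L^\infty(D)} + \|f-f_n\|_{L^\infty(D)} \to 0$, then closes this direction. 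I would note that this uses neither smoothness nor invertibility of the limit $f$ — uniform convergence of the bijections $f_n$ alone suffices — so it covers exactly the way the lemma is invoked in Conclusion~(a).

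For $(\Leftarrow)$ I would start from $c_n := f_{n+1}\circ f_n^{-1} \to \boldsymbol{id}_D$ uniformly. Since $f_{n+1} = c_n \circ f_n$, the same surjectivity argument gives $\|f_{n+1}-f_n\|_{L^\infty(D)} = \|c_n - \boldsymbol{id}_D\|_{L^\infty(D)} \to 0$, and telescoping gives, for $m > n$,
\[
\|f_m - f_n\|_{L^\infty(D)} \;\le\; \sum_{k=n}^{m-1} \|c_k - \boldsymbol{id}_D\|_{L^\infty(D)} .
\]
The main obstacle is exactly this last step: mere smallness of the individual terms $\|c_k - \boldsymbol{id}_D\|_{L^\infty(D)}$ does not make the tail of this sum small (the partial sums of a divergent series also have vanishing increments), so $\{f_n\}$ need not be uniformly Cauchy without more. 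I would therefore prove the converse under the additional summability assumption $\sum_n \|c_n - \boldsymbol{id}_D\|_{L^\infty(D)} < \infty$ — which is automatic once the scheme converges at a linear rate — whereupon the telescoping bound makes $\{f_n\}$ uniformly Cauchy, hence uniformly convergent to some $f : D \to D$. To promote that limit from a homeomorphism to a diffeomorphism one additionally needs a uniform $C^1$ bound on $\{f_n\}$ and $\{f_n^{-1}\}$, so that a compactness argument applies to the derivatives and the inverse function theorem survives in the limit; under the regularity assumptions of Theorem~3.3 this is the natural way to finish.
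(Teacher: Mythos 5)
Your forward direction is essentially the paper's own argument: both exploit that $f_n^{-1}$ maps $D$ onto $D$ to rewrite $f_{n+1}\circ f_n^{-1}(x)-x$ as $f_{n+1}(y)-f_n(y)$ and then pass through the limit $f$ via the triangle inequality. Your version is in fact tidier, because you note that the change of variables gives an exact identity of sup norms, so you need neither the invertibility of the limit nor the auxiliary lemma on uniform convergence of inverses ($f_n^{-1}\to f^{-1}$), which the paper invokes at this point but never actually uses in the estimate. Since Conclusion (a) of Theorem 3.3 only requires this implication, your proof covers everything the lemma is needed for.

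On the converse you part ways with the paper, and you are right to. The paper's proof deduces from $\sup_{y\in D}|f_{n+1}(y)-f_n(y)|<\varepsilon$ for all $n\ge N$ that $\{f_n\}$ is uniformly Cauchy; this is precisely the invalid step you flag, since vanishing of consecutive differences does not yield the Cauchy property. The backward implication is in fact false as stated: on the closed unit disk take the rotations $f_n(z)=e^{iH_n}z$ with $H_n=\sum_{k\le n}1/k$; then $f_{n+1}\circ f_n^{-1}(z)=e^{i/(n+1)}z\to z$ uniformly, yet $f_n$ has no limit. Your repair — assuming summability of $\|c_n-\boldsymbol{id}_D\|_{L^\infty(D)}$ to get a uniformly Cauchy sequence, plus uniform $C^1$ control to promote the limit from a continuous map to a diffeomorphism (a second point the paper glosses over, as its argument only produces a continuous $f$) — is a sound way to obtain a correct converse, though strictly speaking it proves a modified statement rather than the one printed. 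The defect lies in the paper's statement and proof of this direction, not in your argument, and it does not propagate to Theorem 3.3, which uses only the forward implication.
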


\begin{proof}\,

    \noindent ($=>$):
    
    \noindent Let $\varepsilon > 0$ be arbitrary. Since $f_n \to f$ uniformly on $D$, there exists $N_1 \in \mathbb{N}$ such that for all $n \geq N_1$,
    \begin{equation}
    \sup_{x \in D} \| f_n(x) - f(x) \| < \frac{\varepsilon}{2}.
    \end{equation}
    Additionally, the uniform convergence of $f_n \to f$ and the fact that each $f_n$ and $f$ are diffeomorphisms implies that $f_n^{-1} \to f^{-1}$ uniformly on $D$ by Lemma~\ref{lemma:inverse_function}.
    
    Let $N = \max(N_1, N_2)$. For all $n \geq N$ and all $x \in D$, define $y := f_n^{-1}(x)$. Then $x = f_n(y)$, and we compute
    \begin{equation}
    f_{n+1} \circ f_n^{-1}(x) = f_{n+1}(y),
    \end{equation}
    so that
    \begin{equation}
    \| f_{n+1} \circ f_n^{-1}(x) - x \| = \| f_{n+1}(y) - f_n(y) \|.
    \end{equation}
    Now, we insert $f(y)$ and use the triangle inequality
    \begin{equation}
    \| f_{n+1}(y) - f_n(y) \| \leq \| f_{n+1}(y) - f(y) \| + \| f(y) - f_n(y) \| < \frac{\varepsilon}{2} + \frac{\varepsilon}{2} = \varepsilon.
    \end{equation}    
    Since this holds for all $x \in D$, we have
    \begin{equation}
    \sup_{x \in D} \| f_{n+1} \circ f_n^{-1}(x) - x \| < \varepsilon, \quad\forall n \geq N.
    \end{equation}
    Hence,
    \begin{equation}
    f_{n+1} \circ f_n^{-1} \to \boldsymbol{id}_D \quad \text{uniformly on } D.
    \end{equation}
    
    \noindent ($<=$):
    
    \noindent With $f_{n+1} \circ f_n^{-1} \to \boldsymbol{id}_D$ uniformly on $D$ for every $\varepsilon > 0$, there exists $N \in \mathbb{N}$ such that for all $n \geq N$ and all $x \in D$,
    \begin{equation}
        |f_{n+1}(f_n^{-1}(x)) - x| < \varepsilon.
    \end{equation}
    Let $y = f_n^{-1}(x)$, so $x = f_n(y)$. Then the above becomes
    \begin{equation}
        |f_{n+1}(y) - f_n(y)| < \varepsilon, \quad \forall y \in D.
    \end{equation}
    Thus,
    \begin{equation}
        \sup_{y \in D} |f_{n+1}(y) - f_n(y)| < \varepsilon.
    \end{equation}
    This shows that the sequence $\{f_n\}$ is uniformly Cauchy in $C(D)$, the space of continuous functions on the compact set $D$ with the sup norm.
    
    \noindent Since $D$ is compact and each $f_n$ is continuous, there exists a continuous function $f : D \to D$ such that
    \begin{equation}
        f_n \to f \quad \text{uniformly on } D.
    \end{equation}
\end{proof}

\begin{lemma} 
    Let $D \subset \mathbb{C}$ be a compact set, and let $\{f_n\}_{n=1}^\infty$ be a sequence of diffeomorphism from $D$ onto itself such that $f_n \to f$ uniformly on $D$. Assume that the limit function $f: D \to D$ is also a diffeomorphism. Then
    \begin{equation}
    f_n^{-1} \to f^{-1} \quad \text{uniformly on } D.
    \end{equation}
    \label{lemma:inverse_function}
\end{lemma}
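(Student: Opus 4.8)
The plan is to establish the lemma directly, avoiding any subsequence extraction, by exploiting the uniform continuity of the limit inverse $f^{-1}$. The key structural fact is that $D$ is compact and $f\colon D\to D$ is a homeomorphism (indeed a diffeomorphism, by hypothesis), so $f^{-1}\colon D\to D$ is continuous on a compact set and hence uniformly continuous. Only this — not the smoothness of $f$ or of the $f_n$ — will actually be used.

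First I would fix $\varepsilon>0$ and use the uniform continuity of $f^{-1}$ to choose $\delta>0$ such that $|a-b|<\delta$ implies $|f^{-1}(a)-f^{-1}(b)|<\varepsilon$ for all $a,b\in D$. Then, by the hypothesis $f_n\to f$ uniformly on $D$, I would pick $N$ so that $\sup_{z\in D}|f_n(z)-f(z)|<\delta$ whenever $n\ge N$.

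The main step is a change of variables. Fix $n\ge N$ and an arbitrary $y\in D$, and set $z:=f_n^{-1}(y)\in D$; this is well defined because each $f_n$ maps $D$ bijectively onto itself. Then $y=f_n(z)$, while trivially $z=f^{-1}(f(z))$, so
\[
\bigl|f_n^{-1}(y)-f^{-1}(y)\bigr|=\bigl|z-f^{-1}(f_n(z))\bigr|=\bigl|f^{-1}(f(z))-f^{-1}(f_n(z))\bigr|<\varepsilon,
\]
because $|f(z)-f_n(z)|<\delta$ by the choice of $N$. Since $y\in D$ was arbitrary, $\sup_{y\in D}|f_n^{-1}(y)-f^{-1}(y)|\le\varepsilon$ for all $n\ge N$, which is exactly the asserted uniform convergence $f_n^{-1}\to f^{-1}$ on $D$.

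I do not expect a serious obstacle; the only point that genuinely requires care is justifying that $f^{-1}$ is uniformly continuous on all of $D$, which is where compactness of $D$ and surjectivity of $f$ onto $D$ enter. If one prefers not to invoke uniform continuity of $f^{-1}$ directly, the same conclusion follows from a compactness–contradiction argument: assuming $\sup_{y}|f_n^{-1}(y)-f^{-1}(y)|\not\to 0$, extract (by compactness of $D$) points $y_{n_k}\to y_*$ with $x_{n_k}:=f_{n_k}^{-1}(y_{n_k})\to x_*\in D$, then show $f(x_*)=y_*$ via $|f_{n_k}(x_{n_k})-f(x_*)|\le \sup_{D}|f_{n_k}-f|+|f(x_{n_k})-f(x_*)|\to 0$, whence $x_*=f^{-1}(y_*)$ and continuity of $f^{-1}$ contradicts the assumed separation. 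I would present the direct argument as the proof and mention this variant only in passing.
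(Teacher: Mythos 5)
Your proof is correct, and it differs from the paper's in a small but consequential way: you invoke the uniform continuity of the \emph{limit} inverse $f^{-1}$ (a single fixed function, continuous on the compact set $D$), choose $\delta$ once, and then use the substitution $z=f_n^{-1}(y)$ to write $|f_n^{-1}(y)-f^{-1}(y)|=|f^{-1}(f(z))-f^{-1}(f_n(z))|<\varepsilon$ uniformly in $y$ for all $n\ge N$. The paper argues symmetrically: it fixes $x$, sets $y=f^{-1}(x)$, $x_n=f_n(y)$, and controls $|f_n^{-1}(x)-f_n^{-1}(x_n)|$ via the continuity of $f_n^{-1}$. That version has a subtle ordering problem: the modulus $\delta$ there belongs to $f_n^{-1}$ and hence may depend on $n$, while $n$ is subsequently chosen large depending on $\delta$; making this rigorous requires either equicontinuity of the family $\{f_n^{-1}\}$ or exactly the switch you made to the fixed function $f^{-1}$. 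So your argument is not just an equivalent variant but the cleaner one — the quantifiers come in the right order ($\varepsilon\to\delta\to N$, with $N$ independent of the evaluation point), and it uses only that $f$ is a homeomorphism of the compact set $D$ onto itself, not any smoothness. Your fallback compactness--contradiction sketch is also sound, though unnecessary given the direct argument.
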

\begin{proof}    
    For any $\varepsilon > 0$. Since $f_n^{-1}$ is continuous on the compact set $D$, there exists $\delta > 0$ such that for any $x_1, x_2 \in D$,
    \begin{equation}
        |x_1 - x_2| < \delta \quad \Rightarrow \quad |f_n^{-1}(x_1) - f_n^{-1}(x_2)| < \varepsilon.
        \label{neq:finv_continuous}
    \end{equation}
    
    \noindent Now fix any $x \in D$, and let $y = f^{-1}(x)$. We can have $x = f(y)$ and $x_n := f_n(y)$. For $\delta$ mentioned above, we can find a sufficiently large $n$, as $f_n \to f$ uniformly, to obtain
    \begin{equation}
        |x_n - x| = |f_n(y) - f(y)| < \delta.
    \end{equation}
    
    \noindent Since $f_n$ is bijective, we have $f_n^{-1}(x_n) = y = f^{-1}(x)$. Together with \eqref{neq:finv_continuous}, we have
    \begin{equation}
    \begin{aligned}
        |f_n^{-1}(x) - f^{-1}(x)| &\leq |f_n^{-1}(x) - f_n^{-1}(x_n)| + |f_n^{-1}(x_n) - f^{-1}(x)|  \\ 
        &< \varepsilon + 0 = \varepsilon.
    \end{aligned}
    \end{equation}
    
    \noindent Therefore, $f_n^{-1} \to f^{-1}$ uniformly on $D$.
\end{proof}

\section{Metrics Definition}\label{app:metric}

To evaluate the accuracy and speed of our methods, some accuracy measurements are reported in the following sections. 

The relative error (RE) is defined as
\begin{equation}
    \mathrm{RE}(\boldsymbol{u}^*,\boldsymbol{u}^{\text{GT}}) = 
    \sum_{\boldsymbol{x}\in\mathcal{G}} \frac{|\boldsymbol{u}^*(\boldsymbol{x}) - \boldsymbol{u}^{\text{GT}}(\boldsymbol{x})|^2}{|\boldsymbol{u}^{\text{GT}}(\boldsymbol{x})|^2},
\end{equation}
where $\boldsymbol{u}^*$ is the reconstructed flow, $\boldsymbol{u}^{\text{GT}}$ is the ground truth flow, and $\mathcal{G}$ denotes the set of pixel points.

Peak Signal-to-Noise Ratio (PSNR) is defined as
\begin{equation}
    \mathrm{PSNR}(\boldsymbol{u}^*,\boldsymbol{u}^{\text{GT}}) = 10 \log_{10}\left(\frac{\max_{\boldsymbol{x}\in\mathcal{G}}(\boldsymbol{u}^{\text{GT}}(\boldsymbol{x}))^2}{\frac{1}{|\mathcal{G}|}\sum_{\boldsymbol{x}\in\mathcal{G}}|\boldsymbol{u}^*(\boldsymbol{x}) - \boldsymbol{u}^{\text{GT}}(\boldsymbol{x})|^2}\right),
\end{equation}
where $|\mathcal{G}|$ is number of points in $\mathcal{G}$.

Structural Similarity Index (SSIM) is defined as
\begin{equation}
    \mathrm{SSIM}(\boldsymbol{u}^*,\boldsymbol{u}^{\text{GT}}) = 
    \frac{1}{|\mathcal{G}|}\sum_{\boldsymbol{x}\in\mathcal{G}}\frac{\left(2\mu_{\boldsymbol{u}^*}(\boldsymbol{x}) \mu_{\boldsymbol{u}^{\mathrm{GT}}}(\boldsymbol{x}) + C_1\right)
    \left(2\sigma_{\boldsymbol{u}^*,\boldsymbol{u}^{\mathrm{GT}}}(\boldsymbol{x}) + C_2\right)}
    {\left(\mu_{\boldsymbol{u}^*}(\boldsymbol{x})^2 + \mu_{\boldsymbol{u}^{\mathrm{GT}}}(\boldsymbol{x})^2 + C_1\right)
    \left(\sigma_{\boldsymbol{u}^*}(\boldsymbol{x})^2 + \sigma_{\boldsymbol{u}^{\mathrm{GT}}}(\boldsymbol{x})^2 + C_2\right)},
\end{equation}
where $C_1, C_2$ are small constants to stabilize the division, and
\begin{align*}
&\mu_{\boldsymbol{u}^*} = \boldsymbol{u}^* \ast w, 
\quad\quad\,
\sigma^2_{\boldsymbol{u}^*} = (\boldsymbol{u}^*)^2 \ast w - (\mu_{\boldsymbol{u}^*})^2,\\
&\mu_{\boldsymbol{u}^{\text{GT}}} = \boldsymbol{u}^{\text{GT}} \ast w, \quad
\sigma^2_{\boldsymbol{u}^{\text{GT}}} = (\boldsymbol{u}^{\text{GT}})^2 \ast w - (\mu_{\boldsymbol{u}^{\text{GT}}})^2,
\\
&\sigma_{\boldsymbol{u}^*, \boldsymbol{u}^{\text{GT}}} = (\boldsymbol{u}^* \cdot \boldsymbol{u}^{\text{GT}}) \ast w - \mu_{\boldsymbol{u}^*} \cdot \mu_{\boldsymbol{u}^{\text{GT}}},
\end{align*}
where $\omega$ is a Gaussian kernel of mean as $0$ and standard deviation as $1.5$. The convolution is with reflect padding

The Dice coefficient measures the spatial overlap of two binary masks and is defined as
\begin{equation}
    \mathrm{Dice}(\Omega_*, \Omega_{\mathrm{GT}}) = \frac{2|\Omega_* \cap \Omega_{\mathrm{GT}}|}{|\Omega_*| + |\Omega_{\mathrm{GT}}|},
\end{equation}
where $\Omega_*$ and $\Omega_{\mathrm{GT}}$ represent the predicted and ground-truth regions. $|\Omega|$ is the area of $\Omega$. A value of $1$ indicates perfect overlap.

The 95th percentile Hausdorff distance (HD95) measures the boundary discrepancy between two segmented regions by taking the 95th percentile of the bidirectional surface distances, thereby reducing the influence of outlier points or noisy boundaries. It is defined as
\begin{equation}
\mathrm{HD}_{95}(\Omega_*, \Omega_{\mathrm{GT}}) = 
\max \Big\{
\mathrm{P}_{95}\big( \{ d(x, \Omega_{\mathrm{GT}}) \mid x \in \Omega_* \} \big), \;
\mathrm{P}_{95}\big( \{ d(y, \Omega_*) \mid y \in \Omega_{\mathrm{GT}} \} \big)
\Big\},
\end{equation}
where $d(x, \Omega_{\mathrm{GT}}) = \inf_{y \in \Omega_{\mathrm{GT}}} \|x - y\|$, $d(y, \Omega_*) = \inf_{x \in \Omega_*} \|y - x\|$ and $\mathrm{P}_{95}(\{z_i\})$ denotes the 95\% smallest of the set of $\{z_i\}$.

\end{document}